\documentclass{compositio}

\usepackage{amscd,amsmath,amssymb,amsfonts,bbm}
\usepackage[T1]{fontenc}
\usepackage{lmodern}

\usepackage{mathtools}
\usepackage{enumerate}
\usepackage{multicol}
\usepackage[all]{xy}

\usepackage[unicode,pdfborder={0 0 0},final]{hyperref}

\usepackage{footmisc}

\newtheorem{thm}{Theorem}[section]

\newtheorem{hyp}[thm]{Hypotheses}
\newtheorem{prop}[thm]{Proposition}
\newtheorem{lem}[thm]{Lemma}
\newtheorem{cor}[thm]{Corollary}

\theoremstyle{definition}

\theoremstyle{remark}
\newtheorem{rem}[thm]{Remark}

\numberwithin{equation}{section}

\DeclareMathOperator{\Q}{\mathbb{Q}}

\DeclareMathOperator{\sO}{\mathcal{O}}
\DeclareMathOperator{\sL}{\mathcal{L}}

\DeclareMathOperator{\Br}{Br}

\DeclareMathOperator{\Pic}{Pic}

\DeclareMathOperator{\Sing}{Sing}

\DeclareMathOperator{\Id}{Id}

\DeclareMathOperator{\R}{\mathbb{R}}
\DeclareMathOperator{\bP}{\mathbb{P}}

\DeclareMathOperator{\sm}{sm}

\DeclareMathOperator{\Gal}{Gal}
\DeclareMathOperator{\Z}{\mathbb{Z}}

\DeclareMathOperator{\Hdg}{Hdg}

\DeclareMathOperator{\ev}{ev}
\DeclareMathOperator{\rank}{rank}

\renewcommand{\C}{\mathbb{C}}

\begin{document}

\title{Sums of three squares and Noether-Lefschetz loci}
\author{Olivier Benoist}
\email{olivier.benoist@unistra.fr}
\address{Institut de Recherche Math\'ematique Avanc\'ee\\
UMR 7501, Universit\'e de Strasbourg et CNRS\\
7 rue Ren\'e Descartes\\
67000 Strasbourg, FRANCE.}

\classification{11E25 (primary), 14P99, 14D07, 14M12 (secondary).}
\keywords{Sums of squares, Hodge theory, real algebraic geometry, variations of Hodge structures.}

\begin{abstract}
We show that the set of real polynomials in two variables that are sums of three squares of rational functions is dense in the set of those that are positive semidefinite. We also prove that the set of real surfaces in $\bP^3$ whose function field has level $2$ is dense in the set of those that have no real points.
\end{abstract}

\maketitle

\section*{Introduction}\label{intro}

 \subsection{Sums of squares}
\label{sos}
  Let $\R[x_1,x_2]_{\leq d}$ be the space of real polynomials of degree $\leq d$ in two variables. Consider the cone $P_d\subset\R[x_1,x_2]_{\leq d}$ of polynomials that are positive semidefinite, i.e. that only take nonnegative values. Since an odd degree polynomial changes sign, we will assume that $d$ is even.

  It is known since Hilbert that every polynomial $f\in P_d$ is a sum of four squares in the field $\R(x_1,x_2)$ of rational functions (\cite{Hilbert}, see \cite[p.282]{Landau}).
 If $d\leq 4$, Hilbert \cite{Hilbert2} has shown the stronger statement that $f$ is a sum of three squares in $\R[x_1,x_2]$, but this does not extend in any way to degrees $d\geq 6$. Indeed, there exist polynomials $f\in P_6$ that are not sums of squares of polynomials (Hilbert \cite{Hilbert2}), and that are not sums of three squares of rational functions (Cassels-Ellison-Pfister \cite{CEP}). Motzkin's polynomial $1+x_1^2x_2^4+x_2^2x_1^4-3x_1^2x_2^2$ is an example of both phenomena.

  Sums of squares of polynomials form a closed cone $\Sigma_d\subset P_d$ \cite[Theorem 3]{Bergart}
hence are not dense in $P_d$ as soon as $d\geq 6$. At least when $d=6$, the structure of the cone $\Sigma_d$  is very well understood \cite{Blekh1, Blekh2}.

  Our goal is to complete the picture by studying the set $Q_d\subset P_d$ of polynomials that are sums of three squares in $\R(x_1,x_2)$. It is easily seen to be a countable union of closed subsets of $P_d$, indexed by the degrees of the denominators of the rational functions that appear in a representation of $f\in Q_d$ as a sum of three squares.
 Our main contribution is a proof of the density of this subset.

\begin{thm}\label{main}
The subset $Q_d\subset P_d$ is 
 a countable union of closed semialgebraic subsets of $P_d$, that has empty interior if $d\geq 6$. It is dense in $P_d$.
\end{thm}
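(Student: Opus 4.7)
The countable-union assertion is structural: for each integer $n\geq 0$ let $Q_d^n\subset P_d$ denote the set of polynomials $f$ admitting a representation $f=(a^2+b^2+c^2)/g^2$ with $\deg g\leq n$. Each $Q_d^n$ is the image under a polynomial map of a finite-dimensional real vector space, hence semialgebraic by Tarski--Seidenberg; closedness in $P_d$ follows from a properness argument (representations can be normalized so that $(a,b,c,g)$ lies in a bounded set), and $Q_d=\bigcup_n Q_d^n$. For the empty-interior statement when $d\geq 6$, the plan is to produce, in every open subset of $P_d$, a polynomial failing to be a sum of three squares; extending the Cassels--Ellison--Pfister obstruction, one attaches to $f$ (with smooth vanishing locus) a $\Z/2$-valued invariant coming from a real Brauer class on the double cover of $\bP^2_\R$ branched along $\{f=0\}$, and shows it is locally nonconstant, so takes the nontrivial value on a dense subset of $P_d$.

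The substance is the density, approached via real algebraic geometry. After a small perturbation we may assume $f$ strictly positive of degree $d=2e$. Homogenize to $F\in\R[x_0,x_1,x_2]_d$ and consider the degree-$d$ surface $X_f$ in the weighted projective space $\bP(1,1,1,e)$ cut out by $z^2=-F$, a double cover of $\bP^2_\R$ branched along $\{F=0\}$. Because $f>0$, the surface $X_f$ has no real points. An elementary computation (in the vein of Pfister) shows that $f$ is a sum of three squares in $\R(x_1,x_2)$ if and only if the level of the function field $\R(X_f)$ is at most $2$---which is the content of the secondary theorem advertised in the abstract. Thus Theorem~\ref{main} reduces to the density of level-$\leq 2$ surfaces among real surfaces in this family that have no real points.

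The plan for this reduced statement is Hodge-theoretic. The level of $\R(X_f)$ is at most $2$ iff a specific Brauer class on $X_f$ (equivalently, on the quadric bundle $U_0^2+U_1^2+U_2^2=FU_3^2$ over $\bP^2$) is trivial; this triviality is in turn detected by the algebraicity of an integral Hodge class on $X_f$, or on a closely related variety such as a relative Hilbert scheme of lines in the quadric bundle. The locus in the complex parameter space $\C[x_0,x_1,x_2]_d$ where this Hodge class is algebraic is a countable union of algebraic subvarieties---a Noether--Lefschetz locus---and a classical Griffiths--Green--Voisin infinitesimal argument shows it is dense in the analytic topology.

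The main obstacle, and the technical heart of the paper, is to upgrade this complex density to \emph{real} density inside the open set corresponding to surfaces without real points: every real neighborhood of every admissible real $F_0$ must contain a real $F$ for which the relevant Hodge class on $X_F$ is algebraic. The plan is to run a real version of the Green--Voisin density argument, producing real tangent directions at $F_0$ along which the relevant period integrals attain the rational values witnessing algebraicity, and then to apply the real implicit function theorem. Once real density is in hand, the sums-of-three-squares $\leftrightarrow$ level $2$ $\leftrightarrow$ algebraic Hodge class dictionary translates it back into density of $Q_d$ in $P_d$.
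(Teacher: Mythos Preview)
Your architecture for density matches the paper's: attach to $f$ the real double cover $S=\{Z^2+F=0\}$ of $\bP^2_\R$, reinterpret ``sum of three squares'' on $S$, and argue via a real version of Green's infinitesimal criterion. Your treatment of the countable-union assertion is also essentially the paper's. But two substantive ingredients are missing from your density argument.

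First, the Hodge-theoretic translation is not ``algebraicity of a specific integral Hodge class.'' By Lemmas~\ref{23} and~\ref{NLsquares}, $f$ is a sum of three squares iff $\Pic(S)\to\Pic(S_\C)^G$ fails to be surjective, i.e.\ iff there exists $\omega\in H^2(S(\C),\Z(1))^G$ of type $(1,1)$ which does \emph{not} lift along the edge map $\varepsilon:H^2_G(S(\C),\Z(1))\to H^2(S(\C),\Z(1))^G$ from equivariant cohomology. Two independent conditions are in play, a Hodge condition and a $\Z/2$-valued equivariant obstruction, and neither alone suffices. The paper computes (from $H_1(S(\C),\Z)=0$ and Hochschild--Serre) that $\Coker\varepsilon\simeq\Z/2$, so the non-lifting integral classes form a \emph{translate} of a full lattice in $H^2(S(\C),\R(1))^G$; such a translated lattice still meets the open cone produced by the real Green criterion, and one then parallel-transports to a nearby real $b'$ where $\omega$ becomes Hodge while non-liftability is preserved. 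Your sketch omits the equivariant half entirely; without it you cannot conclude that the line bundle on $S_{b',\C}$ fails to descend to $\R$, and the implication to level $\leq 2$ breaks. Second, you do not indicate how to verify the hypothesis of Green's criterion: one needs, at some real $b_0\in P_d^{\sm}$, a class $\lambda\in H^{1,1}_\R(S_{b_0}(\C))(1)^G$ with $\overline\nabla(\lambda):T_{B,b_0}\to H^{0,2}$ surjective. This is the technical core of the paper (Section~\ref{sec2} and \S\ref{setup}--\ref{Greendouble}): a particular $S_{b_0}$ is embedded in $\Gamma\times\bP^2_\C$ with $\Gamma$ the anisotropic real conic (forcing $S_{b_0}(\R)=\varnothing$, hence $b_0\in P_d^{\sm}$), $\lambda$ is taken to be the class of a determinantal curve on $S_{b_0}$ defined over $\R$, and surjectivity is checked by an explicit cohomological computation.

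Two smaller points. Your empty-interior argument is circular: ``locally nonconstant'' already presupposes that both values of the invariant are taken in every open set, i.e.\ presupposes density of $Q_d$ and of its complement simultaneously. The paper instead invokes Noether--Lefschetz directly as in \cite{CTNL} (a polynomial with coefficients algebraically independent over $\Q$ is not a sum of three squares, and such polynomials are dense), or equivalently a Baire argument. Finally, the secondary theorem in the abstract concerns smooth surfaces in $\bP^3_\R$, not double covers of $\bP^2_\R$; the equivalence you invoke is the elementary Lemma~\ref{23}, not Theorem~\ref{main2}, and Theorem~\ref{main} does not reduce to Theorem~\ref{main2}.
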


  When $d\geq 6$, the set $P_d\setminus \Sigma_d$ of positive semidefinite polynomials that are not sums of squares of polynomials is a nonempty open subset of $P_d$.
  As a consequence of Theorem \ref{main}, $Q_d$ is dense in this open subset, showing the existence of many polynomials $f\in\R[x_1,x_2]_d$ that are sums of three squares in $\R(x_1,x_2)$ but not sums of squares of polynomials. The first examples of such polynomials had been constructed by Leep and Starr \cite[Theorem 2]{LeepStarr}.

\subsection{Strategy of the proof}  Our starting point is Colliot-Th\'el\`ene's Hodge-the\-ore\-tic proof of the Cassels-Ellison-Pfister theorem \cite{CTNL}: he associates to a polynomial $f$ its homogenization $F\in \R[X_0,X_1,X_2]$, and the real algebraic surface defined by $S:=\{Z^2+F(X_0,X_1,X_2)=0\}$. He then interpretes the polynomials $f$ that are sums of three squares in $\R(x_1,x_2)$ as those for which the complex surface $S_{\C}$ carries an extra line bundle of a particular kind, and concludes by applying the Noether-Lefschetz theorem.

  As a consequence, $Q_d$ may be viewed as a union of Noether-Lefschetz loci in $P_d$. Over $\C$, density results for Noether-Lefschetz loci have been first obtained by Ciliberto-Harris-Miranda and Green \cite{CHM}, and we adapt these arguments over $\R$.

  We rely on a real analogue of Green's infinitesimal criterion (\cite[\S 5]{CHM}, \cite[\S 17.3.4]{voisinbook}), which was developed for other purposes in a joint work with Olivier Wittenberg \cite{BW}. Section \ref{sec1} is devoted to establishing this criterion in a form suitable for our needs: Proposition \ref{GreenRgeom}.
One way to verify the hypothesis of Green's criterion is to construct Noether-Lefschetz loci of the expected dimension. Following Ciliberto and Lopez \cite{CL}, we do so in Section \ref{sec2} by considering Noether-Lefschetz loci associated to determinantal curves, a strategy independently adopted by Bruzzo, Grassi and Lopez in \cite{BGL}. Finally, Section \ref{sec3} contains the proof of Theorem~\ref{main}.

\subsection{Level of function fields} 
\label{introlevel} The argument described above may be adapted to other families of real surfaces: here is another application of it.
  Recall that if $K$ is a field, Pfister \cite[Satz 4]{Pfisterlevel} has shown that the smallest integer $s$ such that $-1$ is a sum of $s$ squares in $K$ is a power of $2$ (or $+\infty$): it is the level $s(K)$ of $K$. Moreover, if $X$ is an integral variety over $\R$ of dimension $n$ without real points, $s(\R(X))\leq 2^n$ \cite[Theorem 2]{Pfisterreal}.

  Let us restrict to varieties that are smooth degree $d$ surfaces $S\subset\bP^3_{\R}$, defined by a degree $d$ homogeneous equation $F\in \R[X_0,X_1,X_2,X_3]_d$. Let $\Theta_d\subset\bP(\R[X_0,X_1,X_2,X_3]_d)$ be the set of those surfaces that have no real points. As before, we assume that $d$ is even, since otherwise $\Theta_d=\varnothing$.
 If $d=2$, any such surface $S$ is isomorphic to the anisotropic quadric $\{X_0^2+X_1^2+X_2^2+X_3^2=0\}$, and $s(\R(S))=2$. On the other hand, it follows from the Noether-Lefschetz theorem applied as in \cite{CTNL} that if $d\geq 4$, a very general $S\in \Theta_d$ satisfies $s(\R(S))=4$. In section \ref{sec4}, we will show:
\begin{thm}\label{main2}
The set of surfaces $S\in \Theta_d$ such that $s(\R(S))=2$ is dense in $\Theta_d$.
\end{thm}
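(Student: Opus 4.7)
The plan is to adapt the three-step proof of Theorem~\ref{main} to the family $\Theta_d$ of smooth degree-$d$ hypersurfaces in $\bP^3_\R$ without real points, using the machinery developed in Sections~\ref{sec1}--\ref{sec3}. First I would reinterpret the condition $s(\R(S))=2$ in Hodge-theoretic terms, following Colliot-Th\'el\`ene~\cite{CTNL}: the level of $\R(S)$ is at most $2$ if and only if the quaternion algebra $(-1,-1)\in\Br(\R)$ becomes split over $\R(S)$; by purity for smooth projective surfaces this is equivalent to its vanishing in $\Br(S)[2]$, which via the Kummer exact sequence
$$0\to\Pic(S)/2\to H^2_{\et}(S,\mu_2)\to\Br(S)[2]\to 0$$
amounts exactly to the requirement that the class in $H^2_{\et}(S,\mu_2)$ induced by $(-1)\cup(-1)\in H^2(\Gal(\C/\R),\mu_2)$ lie in the image of $\Pic(S)/2$. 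Geometrically, this is the existence of a rational map from $S$ to the anisotropic real conic $C=\{x^2+y^2+z^2=0\}\subset\bP^2_\R$, equivalently of a Galois-invariant line bundle $L\in\Pic(S_\C)$ whose reduction modulo $2$ realises this obstruction. Since a very general $S\in\Theta_d$ ($d\ge 4$) has $\Pic(S_\C)=\Z\cdot H$ by the Noether-Lefschetz theorem applied as in~\cite{CTNL}, such an $L$ does not exist generically, and the locus where it does is a countable union of closed semialgebraic real Noether-Lefschetz subsets of $\Theta_d$.

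Next I would apply the infinitesimal density criterion Proposition~\ref{GreenRgeom} to this real Noether-Lefschetz subset, as in Section~\ref{sec3}. Its hypothesis reduces to producing a single base surface $S_0\in\Theta_d$ carrying the required line bundle on $S_{0,\C}$ and whose associated Noether-Lefschetz component in $\bP(\R[X_0,X_1,X_2,X_3]_d)$ has the expected codimension. To build $S_0$, I would adapt the determinantal construction of Section~\ref{sec2}, after~\cite{CL,BGL}: choose a real morphism of vector bundles on $\bP^3_\R$ whose degeneracy locus $C_0$ is a smooth real curve, then a real degree-$d$ form $F_0$ vanishing on $C_0$ whose zero locus $S_0=\{F_0=0\}$ is smooth and has no real points, so that $[C_0]\in\Pic(S_{0,\C})$ supplies the desired line bundle $L_0$, and a standard Hilbert-Burch dimension count identifies the codimension of the resulting Noether-Lefschetz component.

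The main obstacle is this last construction. In Theorem~\ref{main}, the surfaces $\{Z^2+F=0\}$ are automatically free of real points whenever $f\in P_d$, whereas here the absence of real points must be imposed by hand on $S_0\subset\bP^3_\R$, simultaneously with smoothness and with the containment of the determinantal curve $C_0$; preserving the expected codimension of the associated real Noether-Lefschetz component under these three constraints is the crux of the argument, and is the analogue in this setting of the work done in Section~\ref{sec3}. Once $S_0$ is in hand, Theorem~\ref{main2} follows from Proposition~\ref{GreenRgeom} by the same density argument as in Section~\ref{sec3}.
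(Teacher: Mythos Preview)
Your outline matches the paper's approach: reinterpret $s(\R(S))=2$ via Lemma~\ref{NLsquares} as the non-surjectivity of $\Pic(S)\to\Pic(S_\C)^G$, construct a specific $S_0\in\Theta_d$ containing a real determinantal curve $C_0$ for which Green's criterion (Proposition~\ref{surj}) holds, and then conclude by the density argument of \S\ref{parproof}. Two remarks are in order.

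First, the step you flag as ``the crux'' --- building $S_0$ with $S_0(\R)=\varnothing$ while containing a real determinantal curve --- is left entirely open in your proposal, and it is the only genuinely new content of Section~\ref{sec4}. The paper's solution (Lemma~\ref{conditionssurface2}) is to force $n\equiv 0\pmod 4$ and take the matrix $M^0$ block-diagonal with $4\times 4$ blocks of quaternionic type, each having determinant $\sum_{i,j}X_i^2X_j^2=(X_0^2+X_1^2+X_2^2+X_3^2)^2$; the first minor $\eta^0$ of $M^0$ is then positive definite, and one perturbs. This forces a case split: $n=d$, $L=\sO_X$ when $d\equiv 0\pmod 4$, and $n=d-2$, $L=\sO_X(2)$ when $d\equiv 2\pmod 4$ (the naive choice $n=d$ fails here because the determinantal curve would have odd degree, hence a real point). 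Once $M$ and $\tau$ are chosen, Hypotheses~\ref{hypo} hold and Proposition~\ref{surj} applies directly --- no separate ``expected codimension'' computation is needed, since Corollary~\ref{coroGreenapplies} already delivers the surjectivity of the infinitesimal map~(\ref{ksmap}).

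Second, your reformulation of the level-$2$ condition through the Kummer sequence and $H^2_{\et}(S,\mu_2)$ is a detour: the paper uses Lemma~\ref{NLsquares} directly, so that the endgame of Proposition~\ref{density} (the Hochschild--Serre edge map $\varepsilon$, the open cone $\Omega$ from Proposition~\ref{GreenRgeom}, and the translated-lattice argument producing a $G$-invariant Hodge class not in the image of $\varepsilon$) transfers verbatim. You should also record that $\Theta_d$ is open and connected (via Proposition~\ref{openconnected}), which is what allows the last assertion of Proposition~\ref{GreenRgeom} to propagate Green's criterion from the single point $b_0$ to a dense subset of $\Theta_d$.
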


\subsection{Conventions about algebraic varieties over $\R$}
\label{conv}
An algebraic variety $X$ over $\R$ is a separated scheme of finite type over $\R$. We denote its complexification by $X_{\C}$. Its set of complex points $X(\C)$ is endowed with an action of $G:=\Gal(\C/\R)\simeq\Z/2\Z$ such that the complex conjugation $\sigma \in G$ acts antiholomorphically.  The real points of $X$ are then the fixed points $X(\R)=X(\C)^G$.

Conversely, suppose that the set of complex points of a reduced quasi-projective algebraic variety $X_{\C}$ over $\C$  is endowed with an action of $G$ given by an antiholomorphic involution. If the isomorphism $X(\C)\simeq\overline{X(\C)}$ that the involution induces between $X(\C)$ and its conjugate variety is algebraic, Galois descent shows that $X_{\C}$  is naturally the complexification of an algebraic variety $X$ over $\R$.
We refer to \cite[I \S 1]{Silhol} or \cite[\S 2.2]{Mangolte} for more details. 

\section{Green's infinitesimal criterion over $\R$}\label{sec1}

This section is devoted to an adaptation over $\R$ of \cite[\S 5]{CHM}, where Green studies the relation between infinitesimal variations of Hodge structure and density of Noether-Lefschetz loci. We follow the exposition of \cite[\S 17.3.4]{voisinbook}. 

\subsection{Variations of Hodge structure over real varieties}
\label{VHSreel}

Let $B_{\C}$ be a smooth algebraic variety over $\C$, and
let $\mathbb{H}^2_{\Q}$ be a $\Q$-local system on $B(\C)$ carrying a weight $2$ variation of Hodge structure: in particular, the holomorphic vector bundle $\mathcal{H}^2:=\mathbb{H}^2_{\Q}\otimes_{\Q}\sO_{B(\C)}$ is endowed with a Hodge filtration by holomorphic subbundles $F^2\mathcal{H}^2\subset F^1\mathcal{H}^2\subset\mathcal{H}^2$, whose graded pieces $\mathcal{H}^{2,0}$, $\mathcal{H}^{1,1}$ and $\mathcal{H}^{0,2}$ may be viewed as $C^{\infty}$ complex subbundles of $\mathcal{H}^2$. Let  $\mathcal{H}^2_{\R}\subset\mathcal{H}^2$ and $\mathcal{H}^{1,1}_{\R}\subset\mathcal{H}^{1,1}$ be the $C^{\infty}$ real subbundles consisting of sections with values in the real local system $\mathbb{H}^2_{\R}:=\mathbb{H}^2_{\Q}\otimes_{\Q}\R$, and $\nabla:\mathcal{H}^2\to\mathcal{H}^2\otimes\Omega^1_{B(\C)}$ be the connection induced by $\mathbb{H}^2_{\Q}$. Finally, we still denote by $\mathcal{H}^2$ the total space of the geometric vector bundle associated to $\mathcal{H}^2$, whose fiber over $b\in B(\C)$ is $\mathcal{H}^2_b=\mathbb{H}^2_{\Q,b}\otimes_{\Q}\C$, and we use the same convention for its subbundles.

Assume now that $B_{\C}$ is the complexification of an algebraic variety $B$ over $\R$, and that we are given an action of $G$ on the $\Q$-local system $\mathbb{H}^2_{\Q}$ that is compatible with that on $B(\C)$. We consider the induced action of $G$ on (the total space of) $\mathcal{H}^2$ induced by the maps:
\begin{equation}
\label{actionantilin}
\mathcal{H}^2_b=\mathbb{H}^2_{\Q,b}\otimes_{\Q}\C\xrightarrow{\sigma\otimes\sigma} \mathbb{H}^2_{\Q,\sigma(b)}\otimes_{\Q}\C=\mathcal{H}^2_{\sigma(b)},
\end{equation}
where $\sigma$ acts naturally on the first factor and via complex conjugation on the second: in particular, $\sigma$ acts $\C$-antilinearly in the fibers of $\mathcal{H}^2$. We make the assumption that this action of $\sigma$ preserves the factors $\mathcal{H}^{p,q}$ 
of the Hodge decomposition. Consequently, there are induced actions of $G$ on $\mathcal{H}^{1,1}$ and $\mathcal{H}^{1,1}_{\R}$.

We will consider the $G$-module $\Z(1):=2\pi\sqrt{-1}\cdot\Z\subset\C$. It is
isomorphic to $\Z$ with an action of $G$ by multiplication by $-1$. We also denote by $\Z(1)$ the $G$-equivariant constant local system on $B(\C)$ with fibers $\Z(1)$, and if $M$ is any $G$-module or $G$-equivariant sheaf on $B(\C)$, we define $M(1)$ to be the tensor product $M\otimes_{\Z}\Z(1)$. If $b\in B(\R)$, we view $\mathcal{H}^{1,1}_{\R,b}(1)$ as a $G$-stable subspace of $\mathcal{H}^2_b$ via the embeddings $\mathcal{H}^{1,1}_{\R,b}\subset\mathcal{H}^2_b$ and $\Z(1)\subset \C$.

\subsection{The infinitesimal criterion}
\label{infcrit}

We fix $b\in B(\R)$, and we choose a $G$-stable connected analytic neighbourhood $\Delta$ of $b\in B(\C)$, on which $\mathbb{H}^2_{\Q}$ is trivialized, and such that $\Delta(\R):=\Delta\cap B(\R)$ is connected and contractible. 
Such a neighbourhood exists: choose any connected contractible neighbourhood of $b$ in $B(\C)$, intersect it with its image by $\sigma$, remove an appropriate closed subset of $\Delta(\R)$, and retain the connected component containing $b$. 

The trivialization of $\mathbb{H}^2_{\Q}$ over $\Delta$ gives rise to an isomorphism
$\left.\mathcal{H}^2\right|_{\Delta}\simeq \Delta\times \mathcal{H}^2_{b}$, that is $G$-equivariant by unicity of the trivialization.

\begin{prop}\label{GreenR}
Suppose that there exists $\lambda\in \mathcal{H}_{\R,b}^{1,1}(1)^G$ such that the map:
\begin{equation}
\label{infvar}
\overline{\nabla}(\lambda):T_{B(\C),b}
\longrightarrow\mathcal{H}_b^{0,2}
\end{equation}
 induced by evaluating the connection $\nabla$ on $\lambda$ is surjective.

 Then there exists an open cone $\Omega\subset \mathcal{H}^2_{\R,b}(1)^G$ such that for every $\omega\in\Omega$, there exists $b'\in\Delta(\R)$ and $\omega'\in \mathcal{H}^{1,1}_{\R,b'}(1)^G$ such that $\omega=\omega'$ under the
identification $\mathcal{H}^2_b\simeq \mathcal{H}^2_{b'}$ given by the trivialization.
\end{prop}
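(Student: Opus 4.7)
The plan is to adapt the classical Green infinitesimal criterion \cite[\S 17.3.4]{voisinbook} in a $G$-equivariant way. Using the trivialization of $\mathbb{H}^2_{\Q}$ over $\Delta$ (which is $G$-equivariant by uniqueness), I would consider the analytic subvariety $\mathcal{F}\subset\Delta\times\mathcal{H}^2_b$ of pairs $(b',\omega)$ with $\omega\in F^1\mathcal{H}^2_{b'}$ under the trivialization; the assumption that $\sigma$ preserves the Hodge components ensures $\mathcal{F}$ is $G$-stable. Its real locus is the real analytic subspace
\[
\mathcal{F}^{\R}:=\mathcal{F}\cap\bigl(\Delta(\R)\times\mathcal{H}^2_{\R,b}(1)^G\bigr),
\]
whose fibre over $b'\in\Delta(\R)$ equals $\mathcal{H}^{1,1}_{\R,b'}(1)^G$ — indeed, a real $G$-invariant class in $F^1$ is forced to be of Hodge type $(1,1)$ by the reality constraint. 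The goal is then to show that the second projection $p\colon\mathcal{F}^{\R}\to\mathcal{H}^2_{\R,b}(1)^G$ is a submersion at $(b,\lambda)$.

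Computing the tangent space, I would identify $T_{(b,\lambda)}\mathcal{F}^{\R}$ with the pairs $(v,\mu)\in T_{B(\R),b}\oplus\mathcal{H}^2_{\R,b}(1)^G$ satisfying the constraint $\pi(\mu)+\overline\nabla_v(\lambda)=0$ in $\mathcal{H}^{0,2}_b$, where $\pi\colon\mathcal{H}^2_b\twoheadrightarrow\mathcal{H}^{0,2}_b$ is the Hodge projection; this relies on the standard identification of the Kodaira--Spencer map with $\overline\nabla$ acting on sections of $F^1$. The differential $dp_{(b,\lambda)}(v,\mu)=\mu$ is thus surjective precisely when every class in $\pi\bigl(\mathcal{H}^2_{\R,b}(1)^G\bigr)$ lies in the image of $\overline\nabla(\lambda)|_{T_{B(\R),b}}$.

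To arrange this from the complex hypothesis, I would invoke $G$-equivariance twice. First, since $\lambda\in\mathcal{H}^{1,1}_{\R,b}(1)^G$ and $\overline\nabla$ is induced by the $G$-equivariant local system, the map $\overline\nabla(\lambda)\colon T_{B(\C),b}\to\mathcal{H}^{0,2}_b$ is $\C$-linear and $G$-equivariant for the antilinear $G$-actions on its source and target, so its complex surjectivity descends to a real surjection $T_{B(\R),b}\to(\mathcal{H}^{0,2}_b)^G$. Second, a direct Hodge-theoretic check shows that $\pi$ sends $\mathcal{H}^2_{\R,b}(1)^G$ onto $(\mathcal{H}^{0,2}_b)^G$: for any $\eta\in(\mathcal{H}^{0,2}_b)^G$ the class $\eta-\overline\eta$ lies in $\mathcal{H}^2_{\R,b}(1)^G$ and projects to $\eta$. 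Combining these yields surjectivity of $dp_{(b,\lambda)}$. The real submersion theorem then produces an open neighbourhood $U\subset\mathcal{H}^2_{\R,b}(1)^G$ of $\lambda$ contained in the image of $p$; as the fibres of $p$ are real vector subspaces, the image is stable under real scaling, and $\Omega:=\bigcup_{t>0}tU$ is the desired open cone.

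The main subtlety I anticipate is the $G$-equivariance bookkeeping: matching the real structure from $\mathbb{H}^2_{\R}$ and the antilinear one induced by $\sigma$, twisted by $\Z(1)$, with the Hodge decomposition — concretely, justifying $\pi(\mathcal{H}^2_{\R,b}(1)^G)=(\mathcal{H}^{0,2}_b)^G$ and that the Kodaira--Spencer identification is compatible with both involutions. Once these compatibilities are cleanly in place, the classical complex argument transfers essentially verbatim.
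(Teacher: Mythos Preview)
Your proof is correct and follows essentially the same route as the paper: both show that the projection from (the total space of) $\mathcal{H}^{1,1}_{\R}(1)^G|_{\Delta(\R)}$ to $\mathcal{H}^2_{\R,b}(1)^G$ is a submersion at $(b,\lambda)$, then exploit the cone structure of the image. The only organizational difference is that the paper first establishes submersivity of the untwisted map $\phi:\mathcal{H}^{1,1}_{\R}|_\Delta\to\mathcal{H}^2_{\R,b}$ by quoting \cite[Lemme~17.21]{voisinbook} and then passes to $G$-invariants via the general principle that fixed loci of smooth involutions are submanifolds with the expected tangent spaces, whereas you compute the $G$-invariant tangent space by hand via your checks (a) and (b); both packagings are equivalent.
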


\begin{proof}
The trivialization of $\mathbb{H}^2_{\Q}$ over $\Delta$ yields an isomorphism
$\left.\mathcal{H}^2_{\R}\right|_{\Delta}\simeq \Delta\times \mathcal{H}^2_{\R,b}$.
Under our surjectivity hypothesis, the composition of the inclusion $\bigl.\mathcal{H}^{1,1}_{\R}\bigr|_{\Delta}\subset \bigl.\mathcal{H}^2_{\R}\bigr|_{\Delta}$ and of the projection to $\mathcal{H}^2_{\R,b}$ gives a map:
$$\phi:\left.\mathcal{H}^{1,1}_{\R}\right|_{\Delta}\to \mathcal{H}^2_{\R,b}$$
that is submersive at $(b,\lambda)$ by \cite[Lemme 17.21]{voisinbook}.
The map $\phi$ is equivariant with respect to the natural action of $G$ on both sides. Consequently, tensoring by $\Z(1)$ and taking $G$-invariants gives rise to a map:
$$\phi': \left.\mathcal{H}^{1,1}_{\R}(1)^G\right|_{\Delta(\R)}\to \mathcal{H}^2_{\R,b}(1)^G,$$
where $\left.\mathcal{H}^{1,1}_{\R}(1)^G\right|_{\Delta(\R)}$ is the $C^{\infty}$ real vector bundle on $\Delta(\R)$ with fiber $\mathcal{H}^{1,1}_{\R,b'}(1)^G$ at $b'\in\Delta(\R)$. 

It is well-known that the fixed locus $M^G$ of a $G$-action on a $C^{\infty}$ manifold $M$ is again a manifold, whose tangent space $T_x(M^G)$ at a fixed point $x\in M^G$ is $(T_xM)^G$ (see the much more general \cite[I \S 2.1]{Audin}). This implies that $\phi'$ is still submersive at $(b,\lambda)$. Consequently, the image of $\phi'$ contains an open set of $\mathcal{H}^2_{\R,b}(1)^G$. Since this image is obviously a cone, it contains an open cone $\Omega\subset \mathcal{H}^2_{\R,b}(1)^G$. This cone has the required property.
\end{proof}

\subsection{Density} 

In the classical complex case, the set of points in the base where Green's criterion may be verified is the complement of a complex-analytic subset. Consequently, assuming the base connected, it is dense if it is non-empty. The same holds in our setting if one restricts to a connected component of $B(\R)$, as one sees by adapting the argument to the real-analytic category.

\begin{prop}
\label{denseGreen}
Let $K\subset B(\R)$ be a connected component. The set of $b\in K$ for which there exists $\lambda\in  \mathcal{H}_{\R,b}^{1,1}(1)^G$ such that the map (\ref{infvar}) is surjective is either empty, or dense in $K$.
\end{prop}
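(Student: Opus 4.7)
The plan is to adapt the complex-analytic argument from \cite[\S 5]{CHM}, \cite[\S 17.3.4]{voisinbook} to the real-analytic category: I would show that the bad locus $V := \{b \in K : \text{no such } \lambda \text{ exists}\}$ is a closed real-analytic subset of $K$, then invoke the identity principle on the connected real-analytic manifold $K$ to conclude that $V$ is either all of $K$ or has empty interior, which gives the desired dichotomy.

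First I would work on a single neighbourhood $\Delta(\R)$ of the type constructed in \S\ref{infcrit}. Over such a $\Delta(\R)$, the trivialization of $\mathbb{H}^2_{\Q}$ endows the bundles $\mathcal{H}^{1,1}_{\R}(1)^G$, $T_{B(\C)}|_{\Delta(\R)}$, $\mathcal{H}^{0,2}|_{\Delta(\R)}$ and the Gauss-Manin connection with compatible real-analytic structures. After shrinking $\Delta(\R)$ and picking real-analytic trivializations of these bundles, the map $\overline{\nabla}$ takes the form of a family of complex $s \times r$ matrices
\[ M(b,\lambda) \,=\, \sum_{i=1}^{m} \lambda_i \, M_i(b), \]
real-analytic in $b$ and $\R$-linear in $\lambda$, where $s = \dim_{\C}\mathcal{H}^{0,2}$, $r = \dim_{\C} T_{B(\C)}$ and $m = \dim_{\R}\mathcal{H}^{1,1}_{\R}(1)^G$. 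The $s\times s$ minors $\mu_J(b,\lambda)$ of $M$ are then polynomials of degree $s$ in $\lambda$ whose coefficients $c_{J,\alpha}(b)$ are real-analytic in $b$. Non-surjectivity of $\overline{\nabla}(\lambda)$ for every $\lambda$ amounts to $\mu_J(b,\cdot) \equiv 0$ for all $J$, i.e.\ to the simultaneous vanishing of the finitely many $c_{J,\alpha}$ at $b$. Hence $V\cap\Delta(\R)$ is closed real-analytic in $\Delta(\R)$; gluing over a cover of $K$ shows $V$ is closed real-analytic in $K$, and the identity principle then finishes the argument.

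The main subtlety is verifying the real-analyticity of the data along $B(\R)$. A priori $F^1\mathcal{H}^2$ is only holomorphic on $B(\C)$, its conjugate $\overline{F^1\mathcal{H}^2}$ is only antiholomorphic, and $\mathcal{H}^{1,1} = F^1\cap\overline{F^1}$ is only $C^\infty$ on $B(\C)$; restricted to $B(\R)$, however, the first two become real-analytic subbundles of $\mathcal{H}^2|_{B(\R)}$ and hence so does their intersection. Taking the further intersection with the real local system $\mathbb{H}^2_{\R}$ and then the $G$-invariants preserves real-analyticity, so $\mathcal{H}^{1,1}_{\R}(1)^G|_K$ is a real-analytic vector bundle and $\overline{\nabla}$ is real-analytic. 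Once this verification is in place, the polynomial-in-$\lambda$ minor argument described above goes through and yields density on each connected component of $B(\R)$.
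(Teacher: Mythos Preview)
Your proof is correct and follows essentially the same route as the paper: both verify that the relevant bundles and the map $\overline{\nabla}$ are real-analytic along $B(\R)$, deduce that the bad locus $V$ (called $W$ in the paper) is a real-analytic subset of $K$, and then invoke the identity principle on the connected manifold $K$. The only cosmetic difference is in how the real-analyticity of $V$ is extracted: you expand the minors as polynomials in $\lambda$ and read off finitely many real-analytic coefficient functions $c_{J,\alpha}(b)$, while the paper phrases the same step as the intersection $\bigcap_{v\in\R^k}\bigl(Z\cap(U\times\{v\})\bigr)$ after trivializing the fibre (implicitly relying on Noetherianity of the local ring of real-analytic germs); your formulation is arguably the cleaner of the two.
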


\begin{proof}
We prove that the complement $W\subset K$ of this set is a real-analytic subset of $K$. This concludes because the set of $x\in K$ such that a neighbourhood of $x$ is included in $W$ is easily seen to be open and closed, hence equal to $K$ or empty, and it follows that the complement of $W$ is empty or dense in $K$, as wanted.

Since $F^1\mathcal{H}^2\subset \mathcal{H}^2$ is holomorphic and $\mathcal{H}^2_{\R}\subset\mathcal{H}^2$ is a real-analytic subbundle, we deduce that $\mathcal{H}^{1,1}_{\R}=F^1\mathcal{H}^2\cap\mathcal{H}^2_{\R}$ is a real-analytic subbundle of $\mathcal{H}^2$.
Since the action of $G$ on $B(\C)$ is real-analytic, and since the compatible action on $\mathcal{H}^2$ is induced by an action on $\mathbb{H}^2_{\Q}$, the $G$-action on $\mathcal{H}^2$ is real-analytic, hence so is the subbundle $\mathcal{H}^{1,1}_{\R}(1)^G\subset\left.\mathcal{H}^2\right|_{B(\R)}$. Since the connection $\nabla$ induces a morphism $\overline{\nabla}:\mathcal{H}^{1,1}\otimes T_{B(\C)}\to\mathcal{H}^{0,2}$ of holomorphic vector bundles \cite[(10.2.1)]{voisinbook},
the set $Z\subset\mathcal{H}^{1,1}_{\R}(1)^G$ consisting of the $(b,\lambda)$  for which 
the map (\ref{infvar})
is not surjective is a real-analytic subset of $\mathcal{H}^{1,1}_{\R}(1)^G$.
 
We may deduce that $W:=\{b\in B(\R)\mid \mathcal{H}^{1,1}_{\R,b}(1)^G\subset Z\}$ 
 is a real-analytic subset of $B(\R)$. To check it locally at $b\in B(\R)$, choose a trivialization of $\mathcal{H}_{\R}^{1,1}(1)^G$ in a neighbourhood $U\subset B(\R)$ of $b$ with fiber $\R^k$ and notice that $W\cap U$ is real-analytic as the intersection of the real-analytic subsets $(Z\cap (U\times\{v\})_{v\in\R^k}$ of $U$.
\end{proof}

\subsection{Families of real varieties} 
\label{famreel}

Let us specialize Propositions \ref{GreenR} and \ref{denseGreen} to variations of Hodge structure of geometric origin.

Let $\pi:\mathcal{S}\to B$ be a smooth projective morphism of smooth algebraic varieties over $\R$: both $\mathcal{S}(\C)$ and $B(\C)$ are endowed with an action of $G$ such that $\sigma $ acts antiholomorphically, and the map $\pi:\mathcal{S}(\C)\to B(\C)$ is $G$-equivariant.

The local system $\mathbb{H}^2_{\Q}:=R^2\pi_{*}\Q$ on $B(\C)$ underlies a weight $2$ variation of Hodge structure, whose connection is the Gauss-Manin connection. The action of $G$ on $\mathcal{S}(\C)$ induces an action of $G$ on $\mathbb{H}^2_{\Q}$ that is compatible with the $G$-action on $B(\C)$.
Let us verify the assumption made in \S\ref{VHSreel} that the induced $\C$-antilinear action on $\mathcal{H}^2$ preserves the Hodge decomposition. 
By (\ref{actionantilin}), this action may be written as a composition:
\begin{equation}
\mathcal{H}^2_b=\mathbb{H}^2_{\Q,b}\otimes_{\Q}\C\xrightarrow{\Id\otimes\sigma}
\mathbb{H}^2_{\Q,b}\otimes_{\Q}\C\xrightarrow{\sigma\otimes\Id}\mathbb{H}^2_{\Q,\sigma(b)}\otimes_{\Q}\C=\mathcal{H}^2_{\sigma(b)}.
\end{equation}
The first arrow is the conjugation with respect to the real structure $\mathcal{H}^2_{\R,b}\subset\mathcal{H}^2_b$, hence exchanges the factors $\mathcal{H}_b^{p,q}$ and $\mathcal{H}_b^{q,p}$ of the Hodge decomposition. The second arrow is obtained by functoriality from the antiholomorphic map $\sigma: \mathcal{S}_{\sigma(b)}(\C)\to\mathcal{S}_b(\C)$, hence also exchanges the factors of the Hodge decomposition by the argument of \cite[I Lemma 2.4]{Silhol}.
Consequently, we are indeed in the setting of \S\ref{VHSreel}. 

Let $b\in B(\R)$ and $b\in\Delta\subset B(\C)$ be as in \S\ref{infcrit}. Griffiths \cite[Th\'eor\`eme 10.21]{voisinbook} has computed the map $\overline{\nabla}(\lambda)$ of (\ref{infvar}) as the composition of the Kodaira-Spencer map and of the contracted cup-product with $\lambda$ induced by the pairing $\Omega^1_{\mathcal{S}_{b,\C}}\otimes T_{\mathcal{S}_{b,\C}}\to\sO_{\mathcal{S}_{b,\C}}$:
\begin{equation}
\label{ksGriff}
T_{B_{\C},b}\longrightarrow H^1(\mathcal{S}_{b,\C},T_{\mathcal{S}_{b,\C}})\stackrel{\lambda}\longrightarrow H^2(\mathcal{S}_{b,\C}, \sO_{\mathcal{S}_{b,\C}}).
\end{equation}
Propositions \ref{GreenR} and \ref{denseGreen} then become: 

\begin{prop}
\label{GreenRgeom}
Suppose that there exists $\lambda\in H^{1,1}_{\R}(\mathcal{S}_b(\C))(1)^G$ such that the composition (\ref{ksGriff})
is surjective.
 Then there exists an open cone $\Omega\subset H^2(\mathcal{S}_b(\C),\R(1))^G$ such that for every $\omega\in\Omega$, there exists $b'\in\Delta(\R)$ and $\omega'\in H^{1,1}_{\R}(\mathcal{S}_{b'}(\C))(1)^G$ such that $\omega=\omega'$ under the
identification $H^2(\mathcal{S}_b(\C),\C)\simeq H^2(\mathcal{S}_{b'}(\C),\C)$ given by the trivialization.

Moreover, the set of $b\in B(\R)$ for which there exists such a $\lambda$ is dense in every connected component $K$ of $B(\R)$ that it meets.
\end{prop}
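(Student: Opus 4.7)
The plan is to deduce Proposition \ref{GreenRgeom} essentially immediately from Propositions \ref{GreenR} and \ref{denseGreen}, since all the preparatory work has already been done. The verification carried out in \S\ref{famreel} places the variation $\mathbb{H}^2_{\Q}=R^2\pi_*\Q$ in the abstract framework of \S\ref{VHSreel}, by checking that the $\C$-antilinear $G$-action on $\mathcal{H}^2$ preserves the Hodge decomposition. Under the canonical isomorphisms $\mathcal{H}^2_b\simeq H^2(\mathcal{S}_b(\C),\C)$, $\mathcal{H}^2_{\R,b}\simeq H^2(\mathcal{S}_b(\C),\R)$ and $\mathcal{H}^{1,1}_{\R,b}\simeq H^{1,1}_{\R}(\mathcal{S}_b(\C))$, the $G$-invariants on the two sides correspond, so the conclusions of \S\ref{VHSreel}--\S\ref{infcrit} translate word for word to the geometric setting.

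For the first assertion, my key step is to invoke Griffiths' theorem \cite[Théorème 10.21]{voisinbook}, as already recalled in (\ref{ksGriff}), to identify the abstract map $\overline{\nabla}(\lambda):T_{B(\C),b}\to\mathcal{H}^{0,2}_b$ of (\ref{infvar}) with the composition of the Kodaira-Spencer map and the contracted cup-product with $\lambda$. Under this identification, the hypothesis of the present proposition is exactly the hypothesis of Proposition \ref{GreenR}, whose conclusion, transported along the isomorphisms above, yields precisely the desired open cone $\Omega\subset H^2(\mathcal{S}_b(\C),\R(1))^G$.

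For the density statement, I would apply Proposition \ref{denseGreen} to a connected component $K$ of $B(\R)$. That proposition shows that the set of $b\in K$ for which a suitable $\lambda$ exists is either empty or dense in $K$; so any component $K$ that it meets at all is one in which it is dense, which is the claim.

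I do not expect a genuine obstacle: once Propositions \ref{GreenR} and \ref{denseGreen} are in hand and Griffiths' formula is invoked, everything here is transcription. All the real substance lies upstream — in the proof of Proposition \ref{GreenR}, where one uses that the $G$-fixed locus of a smooth $G$-manifold $M$ is a submanifold with tangent space $(T_xM)^G$, so that a $G$-equivariant submersion remains submersive after passing to $G$-invariants — and in the real-analyticity argument underlying Proposition \ref{denseGreen}.
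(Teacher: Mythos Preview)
Your proposal is correct and matches the paper's approach exactly: the paper does not give a separate proof of Proposition~\ref{GreenRgeom} but simply declares that ``Propositions~\ref{GreenR} and~\ref{denseGreen} then become'' this statement, after having verified in \S\ref{famreel} that the geometric variation fits the abstract framework and having recalled Griffiths' identification~(\ref{ksGriff}). Your write-up faithfully spells out this transcription.
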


\begin{rem}
\label{algcohoclass}
In order to verify the hypothesis of Proposition \ref{GreenRgeom}, one has to construct a class $\lambda\in H^{1,1}_{\R}(\mathcal{S}_b(\C))(1)^G$. A natural source of such cohomology classes are cycle classes $[\mathcal{L}]\in H^2(\mathcal{S}_b(\C),\R(1))$ of line bundles $\mathcal{L}$ on $\mathcal{S}_b$ that are defined over $\R$. Indeed, $[\mathcal{L}]$ is of type $(1,1)$ by Hodge theory, and analyzing the $G$-action on the exponential exact sequence  \cite[I (4.11), Lemma 4.12]{Silhol} shows that it belongs to $H^2(\mathcal{S}_b(\C),\R(1))^G$.
\end{rem}

\begin{rem}
It is possible, in the setting of Proposition \ref{GreenRgeom}, that the set $\Sigma$ of $b\in B(\R)$ such that there exists a $\lambda\in H^{1,1}_{\R}(\mathcal{S}_b(\C))(1)^G$ for which (\ref{ksGriff}) is surjective is dense in some connected component of $B(\R)$, but does not meet another one.

This happens when $\mathcal{S}\to B$ is the universal family of smooth quartic surfaces in $\bP^3_{\R}$. In this case, we will show in Lemma \ref{dense2} that $\Sigma$ is dense in the connected components of $B(\R)$ parameterizing surfaces without real points. However, $\Sigma$ cannot intersect the connected components corresponding to surfaces $S$ whose real locus is a union of $10$ spheres (that exist by \cite[(3.3) p.189]{Silhol}). Indeed, one computes using \cite[VIII \S 3]{Silhol} that for such surfaces, $H^{1,1}_{\R}(S(\C))(1)^G$ has rank $1$, hence is generated by $\lambda:=[\sO_{\bP^3_{\R}}(1)]$. Since the line bundle $\sO_{\bP^3_{\R}}(1)$ is defined on the whole family, its cohomology class $\lambda$ remains Hodge under small deformations showing that (\ref{ksGriff}) vanishes.
\end{rem}

\begin{rem}
If $b\in B(\R)$, let $F_{\infty}:=\sigma\otimes\Id$ be the $\C$-linear involution of $\mathcal{H}^2_b=\mathbb{H}^2_{\Q,b}\otimes_{\Q}\C$ induced by $\sigma$. It exchanges the factors of the Hodge decomposition, hence preserves $\mathcal{H}^{1,1}_b$. Since $\mathcal{H}^{1,1}_{\R,b}(1)^G$ is Zariski-dense in its complexification $(\mathcal{H}^{1,1}_{b})^{F_{\infty}=-1}$, and since the surjectivity of (\ref{ksGriff}) depends algebraically on $\lambda$, it would be sufficient, in the hypotheses of Proposition \ref{GreenRgeom}, to require that $\lambda\in (\mathcal{H}^{1,1}_b)^{F_{\infty}=-1}$. 
\end{rem}

\begin{rem}
If the family $\mathcal{S}\to B$ is induced by a linear system in a fixed variety $X$ over $\R$, it may be better to apply Proposition \ref{GreenR} to the variation of Hodge structure given by the vanishing cohomology. 
\end{rem}

\section{An explicit Noether-Lefschetz locus}
\label{sec2}

To verify the hypothesis of Proposition \ref{GreenRgeom}, we need to construct an appropriate cohomology class $\lambda$. In the complex setting, several strategies are available to do so: the original degeneration method of Ciliberto-Harris-Miranda \cite{CHM}, computations with jacobian rings \cite[Theorem 2]{Kim}, use of explicit Noether-Lefschetz loci \cite{CL}, or the much more general arguments of Voisin \cite{voisin}.

Here, we adapt the strategy of Ciliberto and Lopez \cite[Lemma 1.2, Theorem 1.3 and their proofs]{CL}: we take for $\lambda$ the class of a determinantal curve. This section is devoted to working out this idea in the generality we need. We first analyze Green's criterion when $\lambda$ is the class of a curve in \S\ref{Greencurve}, and specialize to the case of a determinantal curve in \S\ref{determinantal}. The main difference with \cite{CL} is that we argue purely cohomologically rather than geometrically on the Noether-Lefschetz loci.

\subsection{Applying Green's criterion to the class of a curve} 
\label{Greencurve}In this paragraph, we fix smooth projective complex varieties $C\subset S\subset X$, where $C$ is a curve, $S$ a surface and $X$ a threefold. The image in $H^2(S(\C),\C)$ of the Betti cohomology class $\lambda\in H^2(S(\C),\Z(1))$ of $C$ in $S$ is of type $(1,1)$ by Hodge theory. We may thus view it as an element $\lambda\in H^1(S,\Omega^1_S)=H^{1,1}(S)$.
We study the composition:
\begin{equation}
\label{ksmap}
\psi_{\lambda}:H^0(S,N_{S/X})\to H^1(S, T_{S})\stackrel{\lambda}\longrightarrow H^2(S, \sO_{S})
\end{equation}
of the boundary map of the normal exact sequence $0\to T_S\to \left.T_X\right|_S\to N_{S/X}\to 0$
and of the contracted cup-product with $\lambda$.
To do so, we consider the two exact sequences:
\begin{equation}
\label{ses1}
0\to N_{C/S}\to N_{C/X}\to \left.N_{S/X}\right|_C\to 0,
\end{equation}
\begin{equation}
\label{ses2}
0\to \sO_S \to \sO_S(C)\to \left.\sO_S(C)\right|_C\to 0,
\end{equation}
and we recall that $N_{C/S}\simeq\left.\sO_S(C)\right|_C$.

\begin{prop}
\label{composition}
The map $\psi_{\lambda}$ of (\ref{ksmap}) coincides with the composition:
$$H^0(S,N_{S/X})\to H^0(C,\left.N_{S/X}\right|_C)\to H^1(C, N_{C/S})\to H^2(S,\sO_S)$$
 of the restriction map and of the boundary maps of (\ref{ses1}) and (\ref{ses2}).
\end{prop}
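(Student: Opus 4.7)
I would prove the proposition by reducing it, via naturality of connecting homomorphisms, to an identity of maps $H^1(S, T_S) \to H^2(S, \sO_S)$ intrinsic to the pair $C \subset S$, and then verifying this identity by an explicit \v{C}ech computation with local equations for $C$.

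\textbf{Reduction.} Let $p: T_S|_C \twoheadrightarrow N_{C/S}$ and $q: T_X|_C \twoheadrightarrow N_{C/X}$ denote the quotient maps by $T_C$. Together with the identity on $N_{S/X}|_C$, they define a morphism of short exact sequences on $C$ from $0 \to T_S|_C \to T_X|_C \to N_{S/X}|_C \to 0$ to (\ref{ses1}); naturality of connecting homomorphisms yields $\partial_{(\ref{ses1})} = p_* \circ \partial^B$, where $\partial^B$ denotes the boundary of the first sequence. Likewise, naturality under the restriction from $S$ to $C$ of the sequence $0 \to T_S \to T_X|_S \to N_{S/X} \to 0$ identifies $\partial^B \circ (\cdot|_C)$ with $(\cdot|_C) \circ \partial_{\mathrm{KS}}$. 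Combining the two, the right-hand side of the proposition factors as $\partial_{(\ref{ses2})} \circ p_* \circ (\cdot|_C) \circ \partial_{\mathrm{KS}}$, so it suffices to prove that cup product with $\lambda$ coincides with
\[
H^1(S, T_S) \xrightarrow{(\cdot|_C)} H^1(C, T_S|_C) \xrightarrow{p_*} H^1(C, N_{C/S}) \xrightarrow{\partial_{(\ref{ses2})}} H^2(S, \sO_S).
\]

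\textbf{\v{C}ech verification.} Choose an affine open cover $\{U_i\}$ of $S$ on which $C$ is cut out by $f_i \in \sO_S(U_i)$ and on which $\sO_S(C)$ is trivialized by $s_i := 1/f_i$. The transition formula $s_j = (f_i/f_j)\, s_i$ makes $\lambda \in H^1(S, \Omega^1_S)$ representable by the cocycle $\alpha_{ij} = d\log(f_j/f_i)$, and the isomorphism $N_{C/S} \simeq \sO_S(C)|_C$ sends a vector field $v \in T_S|_{U_i \cap C}$ to $v(f_i)|_C \cdot s_i|_C$. Given a $1$-cocycle $\{\eta_{ij}\}$ in $T_S$, the class $p_*(\eta|_C)$ is therefore represented by $\mu_{ij} = \eta_{ij}(f_i)|_C \cdot s_i|_C$. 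Lifting to $\nu_{ij} = \eta_{ij}(f_i) \cdot s_i \in \sO_S(C)(U_{ij})$ and computing the \v{C}ech coboundary of (\ref{ses2})---using the cocycle identity $\eta_{ik} = \eta_{ij} + \eta_{jk}$, the transition formula for the $s_i$, the inclusion $\sO_S \hookrightarrow \sO_S(C)$ given by $1 \mapsto f_i s_i$, and the elementary derivation identity $v(f_j)/f_j - v(f_i)/f_i = \langle v, d\log(f_j/f_i)\rangle$---one finds that $\partial_{(\ref{ses2})}(p_*(\eta|_C))$ is represented by the $2$-cocycle $\langle \eta_{jk}, \alpha_{ij}\rangle \in \sO_S(U_{ijk})$. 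This is precisely the standard \v{C}ech representative of $\lambda \cup \eta$ via the contraction $\Omega^1_S \otimes T_S \to \sO_S$.

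\textbf{Main obstacle.} The computation is routine in spirit but requires careful bookkeeping of three interacting identifications: the trivializations of $\sO_S(C)$ on triple overlaps, the isomorphism $N_{C/S} \simeq \sO_S(C)|_C$, and the inclusion $\sO_S \hookrightarrow \sO_S(C)$. The crucial algebraic step is the logarithmic derivation identity above, which converts the coboundary arising from (\ref{ses2})---a priori a complicated mixed expression involving the $f_i$, $f_j$, and the $\eta_{jk}$---into the contraction of $\eta$ with the Chern class cocycle $\alpha_{ij}$. A minor attention to sign conventions for the \v{C}ech cup product may be required to match the exact sign in the statement.
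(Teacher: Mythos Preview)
Your argument is correct. The reduction step---using naturality of connecting maps for the morphism of short exact sequences from the restricted normal sequence of $S\subset X$ to (\ref{ses1})---is exactly the content of the paper's diagram~(\ref{diag1}), so up to that point the two proofs agree.

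The difference lies in the verification that the resulting map $H^1(S,T_S)\to H^2(S,\sO_S)$ (factoring through $H^1(C,N_{C/S})$ and the boundary of (\ref{ses2})) is the contracted cup-product with $\lambda$. The paper proceeds conceptually: it forms the pull-back extension $\mathcal{E}$ of (\ref{ses2}) along $T_S\to N_{C/S}$, so that the map in question is cup-product with the extension class of $\mathcal{E}$, and then identifies this class with $\lambda$ by exhibiting, after twisting by $\sO_S(-C)$, an explicit duality with Atiyah's extension $D(\sO_S(C))$ and invoking \cite[Proposition~12]{Connections}. Your approach is a direct \v{C}ech computation with local equations $f_i$ for $C$, in which the logarithmic-derivative identity $v(f_j)/f_j - v(f_i)/f_i = \langle v, d\log(f_j/f_i)\rangle$ does all the work. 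This is more elementary and entirely self-contained, at the cost of tracking several identifications on triple overlaps; the paper's route is cleaner once one is willing to cite Atiyah, and it has the conceptual benefit of making the role of the Atiyah class of $\sO_S(C)$ visible. Your caveat about the sign is well placed: depending on the conventions for transition functions and for the \v{C}ech cup product, one obtains $\pm\lambda$, and fixing this sign is the only point where care is needed.
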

We first recall some properties of extension classes used in the proof. Let 
\begin{equation}
\label{extension}
0\to\mathcal{A}\xrightarrow{f}\mathcal{B}\xrightarrow{g}\mathcal{C}\to 0
\end{equation}
 be an exact sequence of locally free sheaves on $S$. Choose an open cover $(U_i)$ of $S$  such that $g|_{U_i}$ admits a section $\sigma_i:\mathcal{C}|_{U_i}\to \mathcal{B}|_{U_i}$. Setting $\tau_{i,j}:=\sigma_j-\sigma_i\in H^0(U_i\cap U_j,\mathcal{A}\otimes\mathcal{C}^\vee)$ gives rise to a cocycle $(\tau_{i,j})$ whose  cohomology class $\xi\in H^1(S,\mathcal{A}\otimes\mathcal{C}^\vee)$ is independent of the choices. It is the extension class of (\ref{extension}).
Direct computations with cocycles show that the extension class of the tensor product $0\to\mathcal{A}\otimes\mathcal{L}\xrightarrow{f}\mathcal{B}\otimes\mathcal{L}\xrightarrow{g}\mathcal{C}\otimes\mathcal{L}\to 0$  by a line bundle $\mathcal{L}$ on $S$ is equal to $\xi$, that the extension class of the dual $0\to\mathcal{C}^\vee\xrightarrow{g^\vee}\mathcal{B}^\vee\xrightarrow{f^\vee}\mathcal{A}^\vee\to 0$ is equal to $-\xi$, and that the boundary maps $H^q(S,\mathcal{C})\to H^{q+1}(S,\mathcal{A})$ in the long exact sequence of cohomology associated to (\ref{extension}) are induced by the cup-product by $\xi$.
 
\begin{proof}[Proof of Proposition \ref{composition}]
The proposition follows from the compatibility of the two boundary maps appearing in the commutative diagram of coherent sheaves on $S$:
\begin{align}
\label{diag1}
\begin{aligned}
\xymatrix@R=3ex{
0\ar[r] & T_S \ar[r] \ar[d] & \left.T_X\right|_S \ar[d] \ar[r] & N_{S/X}\ar[d]\ar[r] & 0 \\
0\ar[r] & N_{C/S} \ar[r] & N_{C/X} \ar[r] &\left.N_{S/X}\right|_C\ar[r] & 0,
}
\end{aligned}
\end{align}
and in the pull-back diagram:
\begin{align}
\label{diag2}
\begin{aligned}
\xymatrix@R=3ex{
0\ar[r] & \sO_S\ar[r] \ar@{=}[d] & \mathcal{E}\ar[r]\ar[d] & T_S\ar[d]\ar[r] & 0 \\
0\ar[r] & \sO_S \ar[r] & \sO_S(C) \ar[r] &N_{C/S}\ar[r] & 0,
}
\end{aligned}
\end{align}
in addition to the fact that the boundary map $H^1(S,T_S)\to H^2(S,\sO_S)$ associated to the first line of (\ref{diag2}) is induced by the cup-product with its extension class, that turns out to be equal to $\lambda$. 
To verify this fact, we rather consider the twist of (\ref{diag2}) by $\sO_S(-C)$:
\begin{align*}
\label{diag2bis}
\begin{aligned}
\xymatrix@R=3ex{
0\ar[r] & \sO_S(-C)\ar[r] \ar@{=}[d] & \mathcal{E}(-C)\ar[r]\ar[d] & T_S(-C)\ar[d]\ar[r] & 0 \\
0\ar[r] & \sO_S(-C) \ar[r] & \sO_S \ar[r] &\sO_C\ar[r] & 0,
}
\end{aligned}
\end{align*}
and we prove that the extension on the first line is canonically dual to the extension: $$0\to\Omega^1_S(C)\to D(\sO_S(C))\to \sO_S(C)\to 0$$ defined by Atiyah in \cite[\S 4]{Connections}, and whose extension class (the Atiyah class) is equal to $-\lambda$ by \cite[Proposition 12]{Connections} (the factor $2\pi i$ in loc. cit. corresponds to the comparison between Chern classes in Betti and de Rham cohomology, and is accounted for here by our definition of $\Z(1)$). 

To check this duality statement, choose $x\in S$ and local sections $f\in \sO_{S,x}$ and $v\in T_S(-C)_x$ that coincide in $\sO_{C,x}$
hence induce $(f,v)\in\mathcal{E}(-C)_x$. Let $s\in\sO_S(C)_x$ and $\beta\in \Omega^1_S(C)_x$ giving rise to a local section $(s,\beta)\in D(\sO_S(C))_x$ in the notations of  \cite[\S 4]{Connections}. A direct computation shows that $(f,v)\cdot(s,\beta)=\beta(v)+sf-ds(v)$ is well-defined, $\sO_{S,x}$-linear, and induces the required duality.
\end{proof}

\begin{cor}
\label{coroGreenapplies}
If the groups $H^1(S,N_{S/X}(-C))$, $H^1(C,N_{C/X})$ and $H^2(S,\sO_S(C))$ vanish, the map $\psi_{\lambda}$ of (\ref{ksmap}) is surjective.
\end{cor}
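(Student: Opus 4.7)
The plan is to apply Proposition \ref{composition}, which factors $\psi_{\lambda}$ as a three-step composition
\[
H^0(S,N_{S/X})\xrightarrow{\alpha}H^0(C,\left.N_{S/X}\right|_C)\xrightarrow{\beta}H^1(C,N_{C/S})\xrightarrow{\gamma}H^2(S,\sO_S),
\]
and then to check that each of $\alpha$, $\beta$, $\gamma$ is surjective under the three vanishing hypotheses, one hypothesis for each arrow. Surjectivity of the composition then follows.

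For $\alpha$, I would take the cohomology long exact sequence of the restriction short exact sequence $0\to N_{S/X}(-C)\to N_{S/X}\to\left.N_{S/X}\right|_C\to 0$ on $S$; surjectivity of $\alpha$ is exactly the hypothesis $H^1(S,N_{S/X}(-C))=0$. For $\beta$, I would take the long exact sequence of (\ref{ses1}) on $C$; the next term after $H^0(C,\left.N_{S/X}\right|_C)$ in the boundary map is $H^1(C,N_{C/X})$, so surjectivity of $\beta$ follows from $H^1(C,N_{C/X})=0$. For $\gamma$, using the identification $N_{C/S}\simeq\left.\sO_S(C)\right|_C$, I would use the long exact sequence of (\ref{ses2}) on $S$; the boundary map $H^1(C,\left.\sO_S(C)\right|_C)\to H^2(S,\sO_S)$ is followed by $H^2(S,\sO_S(C))$, whose vanishing yields surjectivity of $\gamma$.

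Since Proposition \ref{composition} does all the conceptual work, there is essentially no obstacle: the proof reduces to writing down three long exact sequences of cohomology and reading off the hypothesis that ensures each successive map is a surjection.
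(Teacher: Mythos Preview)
Your proof is correct and matches the paper's argument exactly: the paper's proof is the single sentence ``Using (\ref{ses1}) and (\ref{ses2}), this follows from Proposition \ref{composition}'', which you have simply spelled out by writing down the three long exact sequences and reading off the surjectivity of $\alpha$, $\beta$, $\gamma$ from the three vanishings. (Your phrasing for $\beta$ is slightly off --- it is the term \emph{after} the target $H^1(C,N_{C/S})$ in the long exact sequence of (\ref{ses1}) that is $H^1(C,N_{C/X})$ --- but the argument is the intended one.)
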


\begin{proof}
Using (\ref{ses1}) and (\ref{ses2}), this follows from Proposition \ref{composition}.
\end{proof}

\subsection{The case of a determinantal curve}
\label{determinantal}
We now restrict the situation to the case where $C$ is a determinantal curve. Let $X$ be a smooth projective connected complex threefold, and $n\geq 1$ be an integer.
Let $L, H_1,\dots, H_n$ be base-point free line bundles on $X$, and define $H:=H_1+\dots+H_n$. 
In this paragraph, we make the following assumptions:
\begin{hyp}
\label{hypo}
For every $1\leq j,k\leq n$:
\begin{enumerate}[(i)]
\item $H^2(X,\sO_X)=0$,
\item $H^1(X,H_j)=H^2(X,H_j-H_k)=H^3(X,-H_j)=0$,
\item $H^0(X,K_X+L)=H^1(X,L)=H^1(X,K_X+L-H_j)=H^2(X,L-H_j)=0$.
\end{enumerate}
\end{hyp}
The particular case considered in \cite{CL} is $X=\bP^3_{\C}$, $H_j=\sO_{\bP^3_{\C}}(1)$ and $L=\sO_{\bP^3_{\C}}(2)$.

  Choose $M=(M_{i,j})_{1\leq i\leq n+1, 1\leq j\leq n}$ a $(n+1)\times n$ matrix with $M_{i,j}\in H^0(X,H_j)$.
Let $C\subset X$ be defined by the vanishing of the maximal minors of $M$, and $S\subset X$ be the zero-locus of a section $\tau\in H^0(X,H+L)$ vanishing on $C$.

\begin{lem}\label{smooth}
If $M$ is general, $C$ is a smooth curve, possibly empty. Once such a matrix $M$ is fixed, if $\tau$ is general, $S$ is a smooth surface, possibly empty. 
\end{lem}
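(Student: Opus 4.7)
The proof proceeds in two steps: I first show smoothness (possibly emptiness) of $C$ for generic $M$, and then, with such an $M$ fixed, smoothness of $S$ for generic $\tau$. Each step is a transversality or Bertini argument in a standard vein.

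For $C$, let $V := \bigoplus_{i,j} H^0(X, H_j)$ parameterize the matrices $M$, and form the universal incidence variety
\[
\mathcal{I} := \{(M, x) \in V \times X : \mathrm{rank}\, M(x) \le n-1\}.
\]
Base-point freeness of each $H_j$ makes the evaluation $V \to \bigoplus_{i,j}(H_j)_x$ surjective at every $x \in X$, so $\mathcal{I}$ is cut out over $X$ as the preimage of the universal $(n+1) \times n$ rank-$\le n-1$ determinantal variety $D_{n-1}$ under a family of linear surjections. Since $D_{n-1}$ has codimension $2$ and its singular locus $D_{n-2}$ has codimension $6$ in the space of matrices, the sublocus $\mathcal{I}' \subset \mathcal{I}$ of rank-$\le n-2$ pairs has codimension $6$ in $V \times X$, so its first-projection image has codimension $\ge 3$ in $V$; for generic $M$, the fiber $C_M$ thus contains no rank-$\le n-2$ points. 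Generic smoothness (in characteristic $0$) applied to the map $\mathrm{pr}_V : \mathcal{I} \setminus \mathcal{I}' \to V$ of smooth varieties then forces $C_M$ to be smooth (possibly empty) for generic $M$.

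For $S$, fix $M$ with $C$ smooth and set $W := H^0(X, \mathcal{I}_C(H+L))$. I apply Bertini in two stages. Off $C$, at any $x \notin C$ some maximal minor $m$ of $M$ satisfies $m(x) \ne 0$, and base-point freeness of $L$ yields $s \in H^0(X, L)$ with $s(x) \ne 0$, so $s \cdot m \in W$ does not vanish at $x$; the classical Bertini theorem then gives smoothness of $S \setminus C$ for generic $\tau$. Smoothness of $S$ at a point $x \in C$ is equivalent, via the short exact sequence
\[
0 \to \mathcal{I}_C^2(H+L) \to \mathcal{I}_C(H+L) \to N^*_{C/X} \otimes (H+L)|_C \to 0,
\]
to non-vanishing at $x$ of the image of $\tau$ in $(N^*_{C/X} \otimes (H+L))|_{C,x}$. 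It therefore suffices to show that the image of $W$ in $H^0(C, N^*_{C/X} \otimes (H+L)|_C)$ globally generates this rank-$2$ bundle on the curve $C$, for then its generic section vanishes nowhere. At $x \in C$, Step~$1$ ensures $M(x)$ has rank exactly $n-1$, and a standard local computation (pivoting on a nonzero entry of $M(x)$) exhibits two of the $n+1$ maximal minors of $M$ whose differentials at $x$ are linearly independent and span $N^*_{C/X, x}$. Combined with base-point freeness of $L$, this gives the required surjectivity of the evaluation $W \to (N^*_{C/X} \otimes (H+L))|_{C,x}$ at each $x \in C$.

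The main technical point is the last claim, that two maximal minors of $M$ have linearly independent differentials at every point of $C$; this is the classical local description of determinantal ideals at corank-one points, and it relies essentially on the rank-exactly-$(n-1)$ condition guaranteed by Step~$1$.
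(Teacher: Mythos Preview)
Your proof is correct and follows essentially the same route as the paper's: the same incidence-variety plus generic-smoothness argument for $C$, and for $S$ the same splitting into Bertini off the base locus $C$ and a codimension-two count along $C$. The only cosmetic difference is that you phrase the along-$C$ step via global generation of $N^*_{C/X}\otimes(H+L)|_C$ (justified through the determinantal local structure at corank-one points), whereas the paper argues directly that global generation of $\mathcal{I}_C(H+L)$ together with smoothness of $C$ forces the evaluation map $H^0(X,\mathcal{I}_C(H+L))\to\sO_X(H+L)\otimes(\mathfrak{m}_x/\mathfrak{m}_x^2)$ to have rank~$2$.
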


\begin{proof}
Let $\mathcal{M}:=\bigoplus_{1\leq i\leq n+1, 1\leq j\leq n}H^0(X,H_j)$ be the parameter space for such matrices, and let $\mathcal{C}\subset\mathcal{M}\times X$ be the universal variety defined by the vanishing of maximal minors. We consider the fiber $\mathcal{C}_x$ of the second projection $\mathcal{C}\to X$ at $x\in X(\C)$. Since the $H_j$ are base-point free, evaluation at $x$ with respect to local trivializations yields a linear surjection $\ev_x:\mathcal{M}\to M_{n+1,n}(\C)$, and $\mathcal{C}_x=\{M\in \mathcal{M}\mid \rank(\ev_x(M))<n\}$.
It follows that $\mathcal{C}_x\subset\mathcal{M}$ is irreducible of codimension $2$ and that its singular locus $\Sing(\mathcal{C}_x)\subset\mathcal{M}$, being the inverse image by $\ev_x$ of the the set of matrices of rank $<n-1$, has codimension $6$ \cite[Proposition 1.1]{Determinantal}. We deduce that $\mathcal{C}$ is irreductible of dimension $\dim(\mathcal{M})+1$ and that $\dim(\Sing(\mathcal{C}))<\dim(\mathcal{M})$. It follows that the generic fiber of the first projection $\mathcal{C}\to \mathcal{M}$ has dimension $1$ (but may be empty if this projection is not dominant) and does not meet $\Sing(\mathcal{C})$, hence is smooth by generic smoothness as we are in characteristic $0$. Consequently, we may choose $M\in\mathcal{M}$ so that $C$ is a smooth curve.

That $S$ may be chosen smooth follows from an easy variant of the results of \cite{AK}.
Since $C$ is defined by the vanishing of sections of $H$, $\mathcal{I}_C(H)$ is generated by its global sections, and since $L$ is base-point free, so is $\mathcal{I}_C(H+L)$. Fix $x\in C$, and let $\mathfrak{m}_x\subset\sO_{X,x}$ be the maximal ideal.  As $C$ is a smooth curve, the evaluation map $H^0(X,\mathcal{I}_C(H+L))\to \sO_X(H+L)\otimes(\mathfrak{m}_x/\mathfrak{m}_x^2)$ has rank $2$, so that the set of $\tau\in H^0(X,\mathcal{I}_C(H+L))$ whose zero-locus is singular at $x$ has codimension $2$. A dimension count shows that the zero-locus of a general $\tau$ is smooth along $C$. The base locus of the linear system $H^0(X,\mathcal{I}_C(H+L))$ being $C$, it follows from Bertini's theorem \cite[I Th\'eor\`eme 6.10 2)]{Jouanolou} that the zero-locus of a general $\tau$ is smooth off $C$. We deduce that if $\tau$ is general, $S$ is smooth.
\end{proof}

From now on, we suppose that $M$ and $\tau$ have been chosen general in the sense of Lemma \ref{smooth}. Our goal is to show in Proposition \ref{surj} below that, under Hypotheses \ref{hypo}, the morphism $\psi_{\lambda}$ defined in (\ref{ksmap}) is surjective. We first explain the tools that will allow to carry out the relevant coherent cohomology computations.

 By Lemma \ref{smooth}, the curve $C$ is a determinantal curve of the expected codimension in $X$. It follows that its ideal sheaf $\mathcal{I}_C\subset\sO_X$ is resolved by the Eagon-Northcott complex (also called the Hilbert-Burch complex in this particular case, see \cite[Theorem A2.60, Example A2.67]{syzygies}):
\begin{equation}
\label{EN}
0\to \bigoplus_{j=1}^n \sO_X(-H-H_j)\stackrel{M}\longrightarrow \sO_X(-H)^{\oplus n+1}\to\mathcal{I}_C\to 0,
\end{equation}
in which the first map is given by the matrix $M$ and the second one by the $n+1$ maximal minors of $M$.
Restricting (\ref{EN}) to $C$ using right exactness of the tensor product, and noticing that the kernel of $\bigoplus_{j=1}^n \sO_C(-H-H_j)\stackrel{M}\longrightarrow \sO_C(-H)^{\oplus n+1}$ is a line bundle on $C$ that may be computed by calculating its determinant, one gets:
\begin{equation}
\label{ENonC}
0\to  K_C^{-1}\otimes\left.K_X\right|_C\to \bigoplus_{j=1}^n \sO_C(-H-H_j)\stackrel{M}\longrightarrow \sO_C(-H)^{\oplus n+1}\to N_{C/X}^{\vee}\to 0.
\end{equation}
The dual of the Eagon-Northcott complex is still a resolution by \cite[Theorem A2.60]{syzygies}, of a sheaf on $X$ that we denote by $\mathcal{Q}$:
\begin{equation}
\label{ENdual}
0\to\sO_X\to \sO_X(H)^{\oplus n+1}\stackrel{M^\mathsf{T}}\longrightarrow  \bigoplus_{j=1}^n \sO_X(H+H_j)\to \mathcal{Q}\to 0.
\end{equation}
 It follows from Cramer's rule that the maximal minors of $M$ vanish on the support of $\mathcal{Q}$. Consequently, $\mathcal{Q}$ may be computed after restriction to $C$, and the dual of (\ref{ENonC}) shows that $\mathcal{Q}=K_C\otimes \left.K_X^{-1}\right|_C$.  Finally, there is an obvious morphism between (\ref{ENdual}) and the dual of (\ref{ENonC}), where the left vertical arrow is the zero map:

\begin{align}
\label{ENdiagram}
\begin{aligned}
\xymatrix@R=3ex@C=1em{
0\ar[r] & \sO_X \ar[r] \ar[d]^0 &\sO_X(H)^{\oplus n+1} \ar[d] \ar[r] &  \bigoplus_{j=1}^n \sO_X(H+H_j)\ar[d]\ar[r] &K_C\otimes \left.K_X^{-1}\right|_C\ar[r] \ar@{=}[d]& 0 \\
0\ar[r] & N_{C/X} \ar[r] & \sO_C(H)^{\oplus n+1} \ar[r] & \bigoplus_{j=1}^n \sO_C(H+H_j)\ar[r] &K_C\otimes \left.K_X^{-1}\right|_C\ar[r]& 0.
}
\end{aligned}
\end{align}

\begin{prop}
\label{surj}
The map $\psi_{\lambda}$ of (\ref{ksmap}) is surjective.
\end{prop}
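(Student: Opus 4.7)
The plan is to apply Corollary~\ref{coroGreenapplies}, which reduces the surjectivity of $\psi_\lambda$ to the three vanishings
\[
H^1(S,N_{S/X}(-C))=0,\quad H^1(C,N_{C/X})=0,\quad H^2(S,\sO_S(C))=0.
\]
I would attack each by combining the Hilbert--Burch resolution (\ref{EN}) with its derivatives (\ref{ENonC}), (\ref{ENdual}), (\ref{ENdiagram}) and the standard restriction sequences, reducing every problem to cohomology vanishings of explicit line bundles on $X$ that Hypotheses~\ref{hypo} are precisely designed to control.

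For the first vanishing, the fact that $\tau$ vanishes on $C$ yields the short exact sequence $0\to \sO_X\xrightarrow{\tau}\mathcal{I}_C(H+L)\to N_{S/X}(-C)\to 0$ on $X$, whose associated long exact sequence reduces the vanishing to $H^1(X,\mathcal{I}_C(H+L))=0$ (obtained from (\ref{EN}) twisted by $\sO_X(H+L)$ and the vanishings $H^1(X,L)=H^2(X,L-H_j)=0$ of (iii)) together with $H^2(X,\sO_X)=0$ from (i). For the third, Serre duality on the surface $S$ and the adjunction $K_S=(K_X+H+L)|_S$ rephrase the desired vanishing as $H^0(X,\mathcal{I}_{C,S}(K_X+H+L))=0$; the inclusion $\mathcal{I}_S=\sO_X(-H-L)\subset\mathcal{I}_C$ yields the short exact sequence $0\to \sO_X(K_X)\to \mathcal{I}_C(K_X+H+L)\to \mathcal{I}_{C,S}(K_X+H+L)\to 0$, and since $H^1(X,K_X)=H^2(X,\sO_X)^{\vee}=0$ by (i) and Serre duality, it suffices to show $H^0(X,\mathcal{I}_C(K_X+H+L))=0$, which follows from (\ref{EN}) twisted by $\sO_X(K_X+H+L)$ together with $H^0(X,K_X+L)=H^1(X,K_X+L-H_j)=0$ in (iii).

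The delicate case is $H^1(C,N_{C/X})=0$. Here I would exploit the diagram (\ref{ENdiagram}): both rows are acyclic four-term exact sequences, and the levelwise kernel of the vertical maps is the three-term complex
\[
K^\bullet:\ \sO_X\to \mathcal{I}_C(H)^{n+1}\to \bigoplus_{j=1}^n\mathcal{I}_C(H+H_j),
\]
placed in cohomological degrees $-3,-2,-1$. A snake-lemma argument applied to the restriction short exact sequences for $\sO_X(H)^{n+1}$ and $\bigoplus_j\sO_X(H+H_j)$ identifies the unique nonzero cohomology sheaf of $K^\bullet$ as $\mathcal{H}^{-1}(K^\bullet)\simeq N_{C/X}$, so $K^\bullet$ is quasi-isomorphic to $N_{C/X}[1]$ and $\mathbb{H}^0(X,K^\bullet)=H^1(C,N_{C/X})$. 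I would then compute $\mathbb{H}^0(X,K^\bullet)$ via the hypercohomology spectral sequence $E_1^{p,q}=H^q(X,K^p)$, which reduces the vanishing to $H^3(X,\sO_X)=H^2(X,\mathcal{I}_C(H))=H^1(X,\mathcal{I}_C(H+H_j))=0$: the first follows by Serre duality after noting that $H^0(X,K_X)=0$ (a consequence of $H^0(X,K_X+H_j)=0$ from (ii), using that $H^0(X,H_j)\neq 0$), while the other two follow from (\ref{EN}) twisted respectively by $\sO_X(H)$ and by $\sO_X(H+H_j)$, using (i) and (ii).

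The main obstacle will be the snake-lemma identification $\mathcal{H}^{-1}(K^\bullet)\simeq N_{C/X}$ and the accompanying hypercohomology computation, along with the verification that exactly the vanishings collected in Hypotheses~\ref{hypo} suffice; once these are in place, the remaining line-bundle cohomology computations on $X$ are routine matters of chasing long exact sequences of twisted Hilbert--Burch complexes.
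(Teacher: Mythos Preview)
Your proposal is correct and follows essentially the same route as the paper: reduce via Corollary~\ref{coroGreenapplies} to the three vanishings, treat the first and third by twisting the ideal-sheaf sequence $0\to\sO_X(-S)\to\mathcal{I}_C\to\sO_S(-C)\to 0$ (your sequences are precisely this twisted by $\sO_X(H+L)$ and by $\sO_X(K_X+H+L)$), and handle $H^1(C,N_{C/X})$ through the diagram~(\ref{ENdiagram}) and its associated hypercohomology spectral sequence. Your identification of the kernel complex $K^\bullet$ with $N_{C/X}[1]$ is exactly the content of the paper's ``diagram chase'', stated more explicitly; you also make explicit the extra vanishing $H^3(X,\sO_X)=0$ needed on the $E_1^{-3,3}$ term, which the paper absorbs into its appeal to Hypotheses~\ref{hypo} (and which, as you note, follows from $H^3(X,-H_j)=0$ via Serre duality and base-point freeness of $H_j$).
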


\begin{proof}
By Corollary \ref{coroGreenapplies}, it suffices to show the vanishing of the three groups $H^1(S,N_{S/X}(-C))$, $H^1(C,N_{C/X})$ and $H^2(S,\sO_S(C))$.
Twisting the natural exact sequence:
\begin{equation}
\label{ideals}
0\to\sO_X(-S)\to\mathcal{I}_C\to\sO_S(-C)\to 0
\end{equation}
by $\sO_X(S)$ and taking cohomology, we see that the vanishing of $H^1(S,N_{S/X}(-C))$ follows from that of $H^2(X,\sO_X)$ and $H^1(X,\mathcal{I}_C(S))$ which are deduced from Hypotheses \ref{hypo} using (\ref{EN}).

  A diagram chase using (\ref{ENdiagram}) (or, more conceptually, an analysis of the second hypercohomology spectral sequence of this exact sequence of complexes) shows that in order to prove the vanishing of $H^1(C,N_{C/X})$, it suffices to check that $H^2(X,\mathcal{I}_C(H))=H^1(X,\mathcal{I}_C(H+H_j))=0$ for every $1\leq j\leq n$. In turn, these vanishings follow from the Hypotheses \ref{hypo} and from (\ref{EN}).

By Serre duality and adjunction, the vanishing of $H^2(S,\sO_S(C))$ is equi\-valent to that of $H^0(S,K_S(-C))=H^0(S,(K_X+S)|_S(-C))$. To prove it, twist (\ref{ideals}) by $K_X+S$, take cohomology, and notice that $H^1(X,K_X)=H^0(X,\mathcal{I}_C(K_X+S))=0$ by (\ref{EN}) and Hypotheses \ref{hypo}.
\end{proof}

\begin{rem}
In this situation, the Kodaira-Spencer map $H^0(S,N_{S/X})\to H^1(S, T_{S})$ appearing in (\ref{ksmap}) may itself not be surjective. This will be the case when we apply Proposition \ref{surj} in \S\ref{Greendouble}.
\end{rem}

\section{Sums of three squares}\label{sec3}

In this section, we prove Theorem \ref{main}.
We fix an even integer $d=2\delta$ with $\delta\geq 1$.

We explain in \S\ref{sosandNL} the connection between sums of squares in $\R(x_1,x_2)$ and line budles on double covers of $\bP^2_{\R}$, relating our problem to the study of Noether-Lefschetz loci. In \S \ref{setup}--\ref{Greendouble}, we apply the results of Section \ref{sec2} to verify Green's infinitesimal criterion for the family of real double covers of $\bP^2_{\R}$, and the proof of Theorem \ref{main} is completed in \S\ref{parproof}--\ref{proof}.

\subsection{Sums of squares and line bundles}
\label{sosandNL}
We first recall two lemmas already
used by Colliot-Th\'el\`ene \cite{CTNL}.

\begin{lem}
\label{23}
Let $K$ be a field of characteristic $\neq 2$, and $f\in K$, and $L:=K[\sqrt{-f}]$. Then the following are equivalent:\begin{enumerate}[(i)]
\item $f$ is a sum of three squares in $K$.
\item $-1$ is a sum of two squares in $L$.
\end{enumerate}
\end{lem}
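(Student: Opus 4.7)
The plan is to reduce (ii) to a concrete system of equations over $K$ by writing out what it means for two squares in $L = K[t]/(t^2+f)$ (with $t = \sqrt{-f}$) to sum to $-1$, and then to translate algebraically between this system and a three-square representation of $f$ in $K$.

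First I would expand an arbitrary pair of squares in $L$ in the basis $\{1,t\}$:
\[
(x_1 + x_2 t)^2 + (y_1 + y_2 t)^2 = \bigl(x_1^2 + y_1^2 - f(x_2^2 + y_2^2)\bigr) + 2(x_1 x_2 + y_1 y_2)\, t,
\]
so that (ii) becomes the existence of $x_1,x_2,y_1,y_2\in K$ with
\[
x_1 x_2 + y_1 y_2 = 0 \quad \text{and} \quad x_1^2 + y_1^2 - f(x_2^2 + y_2^2) = -1.
\]
Both implications then become algebraic manipulations of this system.

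For (i)$\Rightarrow$(ii), starting from $f = a^2 + b^2 + c^2$, I would guess the solution
\[
(x_1,y_1,x_2,y_2) = \tfrac{1}{b^2+c^2}(-ac,\, ab,\, b,\, c),
\]
so that the orthogonality $x_1 x_2 + y_1 y_2 = 0$ is automatic, and the second equation reduces to the transparent identity $(a^2 - f)/(b^2+c^2) = -1$. For (ii)$\Rightarrow$(i), I would split on whether $(x_2,y_2) = 0$. If it is, the system gives $x_1^2 + y_1^2 = -1$ in $K$, and then the identity
\[
f = \left(\tfrac{f+1}{2}\right)^2 + \left(\tfrac{f-1}{2} x_1\right)^2 + \left(\tfrac{f-1}{2} y_1\right)^2
\]
(which uses only $x_1^2+y_1^2=-1$) exhibits $f$ as a sum of three squares. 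Otherwise, the orthogonality relation forces $(x_1,y_1) = \mu(-y_2, x_2)$ for some $\mu\in K$; substituting into the second equation yields $(\mu^2 - f)(x_2^2+y_2^2) = -1$, which rearranges to
\[
f = \mu^2 + \left(\tfrac{x_2}{x_2^2+y_2^2}\right)^2 + \left(\tfrac{y_2}{x_2^2+y_2^2}\right)^2.
\]

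The main difficulty is not mathematical but bookkeeping: one must cover the case where $L$ fails to be a field (when $-f$ is a square in $K$, so $L\simeq K\times K$) and the several degenerate sub-cases where $b^2 + c^2$ or $x_2^2 + y_2^2$ vanishes. In each such case, either $-1$ turns out to be a square already in $K$ (reducing (ii) to a trivial statement in $K \subseteq L$), or $f$ reduces to $a^2$ and $t/a \in L$ furnishes a square root of $-1$; the identities above then apply directly.
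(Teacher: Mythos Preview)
Your argument is correct. The paper itself does not give a proof: it simply cites \cite[Lemma 1.2]{CTNL} and \cite[Chap.~11, Theorem~2.7]{Lam}, where the result is obtained via the general theory of quadratic forms over quadratic extensions (an anisotropic $K$-form that becomes isotropic over $K(\sqrt{d})$ must contain a binary subform of the shape $\langle a,-ad\rangle$, and one applies this to $\langle 1,1,1\rangle$). Your route is genuinely different and more elementary: you work directly in the free $K$-algebra $K[t]/(t^2+f)$ with basis $\{1,t\}$, translate (ii) into an explicit pair of equations over $K$, and solve them by hand in both directions. What you gain is a self-contained argument that needs no quadratic-form machinery; what the cited approach gains is a uniform conceptual statement that immediately generalises (e.g.\ to sums of $2^n-1$ squares and level $2^{n-1}$).

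Two small remarks on your bookkeeping. First, the sub-case $(x_2,y_2)\neq(0,0)$ with $x_2^2+y_2^2=0$ cannot actually occur: once you write $(x_1,y_1)=\mu(-y_2,x_2)$, both sides of the second equation vanish and you get $0=-1$. So there is slightly less to check than you suggest. Second, your computation is insensitive to whether $L$ is a field, since $\{1,t\}$ is always a $K$-basis of $K[t]/(t^2+f)$; the ``$L$ not a field'' case therefore needs no separate treatment in your setup. (The lemma as literally stated is problematic at $f=0$, but this is an issue with the statement, not with your proof, and is irrelevant to the paper's applications.)
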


\begin{proof}
This is \cite[Lemma 1.2]{CTNL} (see also \cite[Chap. 11 Theorem 2.7]{Lam}).
\end{proof}

\begin{lem}
\label{NLsquares}
Let $S$ be a smooth projective geometrically connected variety over $\R$.
Then the following are equivalent:
\begin{enumerate}[(i)]
\item $-1$ is a sum of two squares in $\R(S)$.
\item the pull-back map $\Pic(S)\to\Pic(S_{\C})^G$ is not surjective.
\end{enumerate}
\end{lem}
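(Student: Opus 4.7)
The plan is to rephrase both conditions in terms of the Brauer group. Recall first that for a field $K$ of characteristic $\neq 2$, the element $-1$ is a sum of two squares in $K$ if and only if the quaternion algebra $(-1,-1)_K$ is split, equivalently the class $\alpha_K\in\Br(K)$ of $\mathbb{H}\otimes_{\R}K$ vanishes (where $\mathbb{H}$ denotes Hamilton's quaternions, i.e.\ the unique nontrivial class in $\Br(\R)\simeq\Z/2\Z$). Applying this with $K=\R(S)$, and using that the restriction $\Br(S)\hookrightarrow\Br(\R(S))$ is injective by Auslander--Goldman since $S$ is smooth hence regular, we see that condition~(i) is equivalent to the vanishing of the pullback of $[\mathbb{H}]\in\Br(\R)$ along the structure morphism $S\to\Spec(\R)$.

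To connect this to the Picard groups, I would invoke the Hochschild--Serre spectral sequence for the étale Galois cover $S_{\C}\to S$ with group $G$, applied to the sheaf $\mathbb{G}_m$. Its low-degree exact sequence takes the form
$$0\to H^1(G,\C^*)\to\Pic(S)\to\Pic(S_{\C})^G\to H^2(G,\C^*)\to\Br(S),$$
where the $H^0$ term is $\C^*$ since $S$ is proper and geometrically connected over $\R$, so $\Gamma(S_{\C},\mathcal{O}^*)=\C^*$. By Hilbert 90 we have $H^1(G,\C^*)=0$, and by definition $H^2(G,\C^*)=\Br(\R)\simeq\Z/2\Z$. Extracting the cokernel yields a short exact sequence
$$0\to\Pic(S)\to\Pic(S_{\C})^G\to\Ker\bigl(\Br(\R)\to\Br(S)\bigr)\to 0.$$

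The pullback $\Pic(S)\to\Pic(S_{\C})^G$ therefore fails to be surjective if and only if $\Br(\R)\to\Br(S)$ is the zero map, i.e.\ if and only if the nontrivial class $[\mathbb{H}]\in\Br(\R)$ dies in $\Br(S)$. Combined with the first paragraph, this gives the equivalence of (i) and (ii). There is no genuine obstacle: everything reduces to invoking the Auslander--Goldman injectivity and the five-term sequence of the Hochschild--Serre spectral sequence for $\mathbb{G}_m$; the content of the lemma is precisely the way these two standard facts interlock, with the order-two group $\Br(\R)$ mediating between the two Picard groups on one side and the algebra $\mathbb{H}$ on the other.
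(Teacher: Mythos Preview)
Your proof is correct and follows essentially the same route as the paper's: both translate (i) into the vanishing of the restriction of the nontrivial class in $\Br(\R)\simeq\Z/2\Z$, and both use the low-degree exact sequence of the Hochschild--Serre spectral sequence for $\mathbb{G}_m$ to see that this vanishing is equivalent to (ii). The only cosmetic difference is that the paper writes the exact sequence with $\Br(\R(S))$ as the final term (quoting \cite[Lemma 1.1]{CTNL}), whereas you stop at $\Br(S)$ and invoke Auslander--Goldman separately to pass between $\Br(S)$ and $\Br(\R(S))$; this is the same argument unpacked.
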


\begin{proof}
This is \cite[Chapter I, Corollary 2.5]{vanhamelthese}. For the convenience of the reader, we recall the argument. Condition (i) is equivalent to the nontrivial quaternion algebra over $\R$ splitting over $\R(S)$ \cite[Proposition 1.1.7]{GilleSzamuely}, hence to the pull-back map $\Z/2\Z\simeq\Br(\R)\to\Br(\R(S))$ being zero. The exact sequence \cite[Lemma 1.1]{CTNL}:
$$0\to \Pic(S)\to\Pic(S_{\C})^G\to \Br(\R)\to\Br(\R(S)),$$
shows that this is equivalent to (ii).
\end{proof}

In \S\ref{parproof}, Lemmas \ref{23} and \ref{NLsquares} will be applied to a positive semidefinite polynomial $f\in K=\R(x_1,x_2)$, and to the quadratic extension $L=K[\sqrt{-f}]=\R(S)$ associated to the double cover $S\to\bP^2_{\R}$ determined by $f$.

\subsection{A real double cover containing a determinantal curve} 
\label{setup}
In this paragraph, we construct varieties $C$, $S$, and $X$ to which the results of \S\ref{determinantal} apply.

Let $\Gamma:=\{X_0^2+X_1^2+X_2^2=0\}\subset\bP^2_{\R}$ be the anisotropic conic over $\R$. There is an isomorphism between its complexification $\Gamma_{\C}$ and the projective line $\bP^1_{\C}$.
The line bundle $\sL:=\sO_{\Gamma_{\C}}(1)$ is however not defined over $\R$ for the real structure we consider on $\Gamma_{\C}$ (as the zero locus of a real section of $\mathcal{L}$ would be a real point of $\Gamma$). 
Instead, it has a so-called quaternionic structure: it may be equipped with an isomorphism $\phi:\sL\stackrel{\sim}\longrightarrow\sigma^*\overline{\mathcal{L}}$ such that $\phi\circ(\sigma^*\overline{\phi})=-\Id$, where $\overline{\sL}$ denotes the conjugate line bundle on the conjugate variety $\overline{\Gamma_{\C}}$, and the real structure $\Gamma$ of $\Gamma_{\C}$ is viewed as an isomorphism $\sigma:\Gamma_{\C}\stackrel{\sim}\longrightarrow\overline{\Gamma_{\C}}$ (see for instance \cite[\S 3]{BHH}). The isomorphism $\phi$ induces a $\C$-antilinear automorphism $\sigma^*$ of $H^0(\Gamma_{\C},\sL)$ such that $\sigma^*\circ\sigma^*=-1$. One may then choose a basis $(A,B)$ of $H^0(\Gamma_{\C},\sL)$ 
on which the action of $\sigma^*$  is given by:
\begin{equation}
\label{quaternionicaction}
 \sigma^*A=B\textrm{ and }\sigma^*B=-A.
\end{equation}
In view of (\ref{quaternionicaction}), the isomorphism $\Gamma_{\C}\xrightarrow{\sim}\bP^1_{\C}$ defined by $x\mapsto[A(x):B(x)]$ induces an action of $G$ on $\bP^1(\C)$ given by 
 $\sigma([a:b])=[\overline{b}:-\overline{a}]$. 

We define $Y:=\Gamma\times\bP^2_{\R}$ and $X:=Y_{\C}\simeq\Gamma_{\C}\times\bP^2_{\C}$ to be the complexification of $Y$ with the induced real structure. 
We set $n:=2+\delta$, and introduce the following line bundles on $X$: $H_j:=p_1^*\sO_{\Gamma_{\C}}(1)$ if $j=1,2$, $H_j=p_2^*\sO_{\bP^2_{\C}}(1)$ if $j\in\{3,\dots,2+\delta\}$, $H=\sum_j H_j$ and $L=\sO_X$. Note that Hypotheses \ref{hypo} are satisfied. 

Let $\bP$ be the parameter space for $(n+1)\times n$ matrices $M=(M_{i,j})_{1\leq i\leq n+1, 1\leq j\leq n}$ with $M_{i,j}\in H^0(X,H_j)$ such that no column is identically zero, and whose columns are well-defined up to multiplication by a scalar: $\bP$ is a product of $n$ projective spaces.
To such a matrix $M$, we associate the variety $C\subset X$ defined by the vanishing of all the maximal minors of $M$.
Let $\tau\in H^0(X,H)$ be a section vanishing on $C$, and $S\subset X$ be the zero-locus of $\tau$.

\begin{lem}
\label{conditionssurface}
There exist $M$ and $\tau$ such that:
\begin{enumerate}[(i)]
\item $C$ is a smooth curve and $S$ is a smooth surface,
\item the projection
 $\left.p_2\right|_S:
S\to\bP^2_{\C}$ is a finite double cover ramified along a smooth degree $d$ curve  $D\subset\bP^2_{\C}$,
\item the subvarieties $C$ and $S$ of $X$ are defined over $\R$.
\end{enumerate}
\end{lem}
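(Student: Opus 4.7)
The plan is to show that conditions (i)--(iii) each define a non-empty Zariski-open subset of an appropriate parameter space over $\R$, then use Zariski-density of real points in smooth $\R$-varieties with real points. First I would equip $\bP$ with a real structure. The factors $\bP(H^0(X,H_j)^{\oplus n+1})$ for $j\geq 3$ inherit the standard real structure from $\bP^2_{\R}$; for $j=1,2$, the bundle $\sO_{\Gamma_{\C}}(1)$ is not defined over $\R$, but the quaternionic structure $\phi$ (satisfying $\phi\circ\sigma^*\overline{\phi}=-\Id$) induces a genuine real structure $\tilde{\phi}(s_1,s_2):=(\phi s_2,-\phi s_1)$ on $H_1\oplus H_2\simeq p_1^*\sO_{\Gamma_{\C}}(1)^{\oplus 2}$ satisfying $\tilde{\phi}\circ\sigma^*\overline{\tilde{\phi}}=+\Id$. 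Equivalently, conjugation exchanges the two factors $j=1,2$ of $\bP$ (up to a sign twist) by an antiholomorphic involution, giving $\bP$ the structure of a smooth geometrically integral $\R$-variety with non-empty real locus. For $M\in\bP(\R)$, the maximal minors of $M$ are real sections of the $\R$-line bundle $H$ and generate a $G$-stable ideal, so $C$ is automatically defined over $\R$.

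By Lemma \ref{smooth}, the locus $B\subset\bP$ of matrices for which $C$ is singular is a proper Zariski-closed subset; being $G$-stable, it is defined over $\R$. Since $\bP$ is smooth geometrically integral with $\bP(\R)\neq\emptyset$, its real locus is Zariski-dense in $\bP$, so $B(\R)\subsetneq\bP(\R)$ and a general $M\in\bP(\R)$ gives $C$ smooth and defined over $\R$. Fix such $M$. The space $V:=H^0(X,\mathcal{I}_C(H))$ carries a $\C$-antilinear involution from the $G$-action, and its fixed locus $V^G$ satisfies $V^G\otimes_{\R}\C=V$, hence is Zariski-dense in $V$.

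By Künneth, $H^0(X,H)=H^0(\Gamma_{\C},\sO(2))\otimes H^0(\bP^2_{\C},\sO(\delta))$, so any $\tau\in V\subset H^0(X,H)$ can be written as $\tau=A^2\alpha+AB\beta+B^2\gamma$ with $\alpha,\beta,\gamma\in H^0(\bP^2_{\C},\sO(\delta))$. Three conditions are open in $V$: (a) $S$ is smooth; (b) $p_2|_S$ is finite, equivalently $\alpha,\beta,\gamma$ have no common zero on $\bP^2_{\C}$; and (c) the branch curve $D:=\{\beta^2-4\alpha\gamma=0\}$ is smooth of degree $d=2\delta$. Non-emptiness of (a) is given by Lemma \ref{smooth}; of (b) and (c) follows from generic behavior of triples of sections of $\sO_{\bP^2_{\C}}(\delta)$, once one checks (via Hypotheses \ref{hypo} and the Eagon-Northcott resolution (\ref{EN})) that $V$ is a sufficiently large linear subspace of $H^0(X,H)$ to realize such behavior. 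The intersection of these three $G$-stable non-empty Zariski-open subsets is thus non-empty, and Zariski-density of $V^G$ in $V$ produces a real $\tau$ giving $S$ with all the required properties.

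The main obstacle will be correctly setting up the real structure on $\bP$ via the quaternionic structure on $\sO_{\Gamma_{\C}}(1)$ and checking that $V$ is rich enough in $H^0(X,H)$ for (a), (b), (c) to be simultaneously non-empty. Once these are in place, the remaining ingredients are Lemma \ref{smooth}, standard Bertini, and Zariski-density of real points in smooth $\R$-varieties.
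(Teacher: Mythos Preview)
Your handling of the real structure is essentially a repackaging of the paper's, and a correct one: the paper first puts the quaternionic $G$-action on each of the first two column-factors of $\bP$ (an action with no fixed points) and then passes to the quotient by the $\Z/2\Z$ that swaps columns $1$ and $2$, whereas you fold the swap into the involution from the outset. Both descriptions yield the same real locus, and your claim that for $M\in\bP(\R)$ the ideal of maximal minors is $G$-stable, hence $C$ is defined over $\R$, is right.

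The genuine gap is in your justification of condition (b). Twisting the Eagon--Northcott resolution by $H$ one finds $\dim V=\dim H^0(X,\mathcal{I}_C(H))=n+1=\delta+3$, while $\dim H^0(X,H)=3\binom{\delta+2}{2}$: the space $V$ is a \emph{very small} subspace of $H^0(X,H)$, spanned precisely by the $n+1$ maximal minors of $M$. Whether some $\tau\in V$ cuts out a surface with finite projection to $\bP^2_{\C}$ genuinely depends on $M$ and is not a consequence of any ``size'' estimate on $V$ obtainable from Hypotheses~\ref{hypo}. The paper settles this point not cohomologically but by exhibiting explicit matrices $M$ (one for $\delta=1$, and a specific shape of the first two columns for $\delta\geq 2$) whose first maximal minor already defines a surface finite over $\bP^2_{\C}$; this explicit verification is the missing ingredient in your argument. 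Once you have it, ``(b) is non-empty'' becomes a further open constraint on $M$ that must be imposed \emph{before} you fix a general real $M$; your ordering---fix $M$ with $C$ smooth, then discuss $\tau$---needs that adjustment.

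A minor point: your condition (c) is redundant. Once $S$ is smooth and $p_2|_S$ is a finite double cover, the branch curve $D$ is automatically smooth (locally $S$ is $\{z^2=f\}$ over $\bP^2_{\C}$, which is smooth iff $\{f=0\}$ is). The paper records exactly this observation and does not treat the smoothness of $D$ as an independent open condition on $\tau$.
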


\begin{proof}
Lemma \ref{smooth} shows that there is a non-empty Zariski-open subset of $\bP$ over which $C$ is smooth, and that for a generic choice of $\tau$, $S$ is also smooth.

Let us show that if $M$ is general, the projection $S\to\bP^2_{\C}$ is finite when $\tau$ is the first maximal minor $\det(M_{i,j})_{1\leq i,j\leq n}$ of $M$ (and consequently when $\tau$ is general). It suffices to exhibit one such $M$, for which we use homogeneous coordinates $A,B$ on $\Gamma_{\C}$ as above and $X_0,X_1,X_2$ on $\bP^2_{\C}$.
 One can take
$$M=\begin{pmatrix}
A & 0 &X_0 \\
B & A &X_1 \\
0 & B & X_2
\end{pmatrix}$$
when $\delta=1$. One verifies that a general matrix whose first and second columns are $(A,B,0,\dots,0)$ and $(0,0,A,B,0,\dots,0)$ works when $\delta\geq 2$.

We have shown the existence of a non-empty Zariski-open subset $U\subset\bP$ for which $C$ is a smooth curve, and such that $S$ is smooth with finite projection $S\to\bP^2_{\C}$ for a general choice of $\tau$.
If $P(X_0,X_1,X_2)A^2+Q(X_0,X_1,X_2)AB+R(X_0,X_1,X_2)B^2=0$ is the equation in $X$ of such a surface $S$, where $P,Q$ and $R$ have degree $\delta$, a direct computation shows that the projection $S\to\bP^2_{\C}$ is finite of degree $2$ with ramification locus $D\subset\bP^2_{\C}$ defined by the equation $Q^2=4PR$ of degree $2\delta=d$, and that the smoothness of $S$ implies that of $D$.

Let $U'\subset \bP$ be the open set consisting of matrices whose first two columns are not proportional, and notice that $U\subset U'$. There are fixed point free actions of $\Z/2\Z$ on $U'$ and $U$ obtained by exchanging the first two columns. The quotients are smooth complex algebraic varieties $U/(\Z/2\Z)\subset U'/(\Z/2\Z)$.

  Letting $\sigma$ act on $H^0(\bP^2_{\C},\sO_{\bP^2_{\C}}(1))$ using the natural real structure $\sO_{\bP^2_{\R}}(1)$ of  $\sO_{\bP^2_{\C}}(1)$ and on $H^0(\bP^1_{\C},\sO_{\bP^1_{\C}}(1))$ using (\ref{quaternionicaction}), we obtain a $G$-action on $\bP$, which descends to a $G$-action on $U/(\Z/2\Z)$ and $U'/(\Z/2\Z)$, endowing these algebraic varieties over $\C$ with a real structure by \S\ref{conv}. It is obvious that $U'/(\Z/2\Z)$ has a real point for this real structure (for instance, choose $M_{1,1}=A$, $M_{1,2}=B$, and $M_{i,j}=0$ otherwise). Since $U'/(\Z/2\Z)$ is smooth and irreducible, the implicit function theorem shows that its real points are Zariski-dense \cite[Proposition 1.1]{H17}, 
and we deduce that $U/(\Z/2\Z)$ also has a real point. Choose $M$ to be a matrix lifting this real point: then $C\subset X$ is defined over $\R$. Finally, choose $\tau$ general and defined over $\R$ to ensure that $S\subset X$ is also defined over $\R$.
\end{proof}

\begin{rem}
A variant of our strategy would have been to choose $n=1+\delta$, $H_1:=p_1^*\sO_{\bP^1_{\C}}(2)$ (which has the advantage of being defined over $\R$), $H_j=p_2^*\sO_{\bP^2_{\C}}(1)$ if $j\in\{2,\dots,1+\delta\}$ and $L=\sO_X$. Unfortunately, with these choices, assertion (ii) of Lemma \ref{conditionssurface} would not hold.
\end{rem}

\subsection{Verifying Green's criterion for a  family of double covers}
\label{Greendouble}

To be able to apply Proposition \ref{GreenRgeom} to the family of double covers of $\bP^2_{\R}$, we need a connectedness result going back to Hilbert \cite[p.344]{Hilbert2}, that we state first.

 Let $V:=\C[X_0,\dots X_N]_d$ be the space of degree $d$ homogeneous polynomials $F$ in $N+1$ variables, endowed with its natural real structure, and let $B\subset V$ be the Zariski-open subset parametrizing polynomials $F$ whose zero locus $\{F=0\}\subset\bP^N_{\C}$ is a smooth hypersurface. 
Define $P^{\sm}_d\subset B(\R)$ to be the locus where $F$ is positive semidefinite, and let $P_d^+\subset V(\R)$ be the set of polynomials $F$ such that $F(x)>0$ if $x\in \R^{N+1}\setminus\{0\}$. 

\begin{prop}
\label{openconnected}
The set $P^{\sm}_d$ is open, connected and equal to $B(\R)\cap P_d^+$.
\end{prop}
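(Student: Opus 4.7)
I handle openness and the equality first. The set $P_d^+$ is open in $V(\R)$: for $F\in P_d^+$, homogeneity and compactness of $S^N\subset\R^{N+1}$ yield $\min_{S^N} F>0$, a condition that persists under small perturbations of $F$. Since $B\subset V$ is Zariski-open, $B(\R)\cap P_d^+$ is open in $V(\R)$. The inclusion $B(\R)\cap P_d^+\subset P_d^{\sm}$ is trivial; conversely, a real nonzero zero $x_0$ of a polynomial $F\in P_d^{\sm}$ would be a minimum of $F$ on $\R^{N+1}$, forcing $\nabla F(x_0)=0$, so that $[x_0]$ would be a real singular point of $\{F=0\}_{\C}$, contradicting $F\in B(\R)$.

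For connectedness, I use that the cone $P_d^+$ is convex, hence connected, and reduce to showing that $E:=(V(\R)\setminus B(\R))\cap P_d^+$ has real codimension $\geq 2$ in $V(\R)$ at each of its points. Granted this bound, any two $F_0,F_1\in P_d^{\sm}$ can be joined inside $P_d^{\sm}$ via a generic real affine $2$-plane $\Pi\subset V(\R)$ through them: being semialgebraic and of generic real dimension $0$, the intersection $\Pi\cap E$ is finite, while $\Pi\cap P_d^+$ is open and convex in $\Pi\cong\R^2$, so a path in $\Pi\cap P_d^+$ from $F_0$ to $F_1$ avoiding the finitely many punctures exists.

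The core step is the codimension bound. Fix $F_0\in E$. Since $F_0>0$ on $\R^{N+1}\setminus\{0\}$, no singular point of $\{F_0=0\}_{\C}$ is real. Near a smooth point $F_0$ of $V\setminus B$ (the generic case), the discriminant decomposes locally as a union of smooth complex-analytic branches $D_\alpha$ indexed by the non-real singular points $[x_\alpha]$ of $\{F_0=0\}_{\C}$; a deformation computation using $\nabla F_0(x_\alpha)=0$ identifies the tangent to $D_\alpha$ at $F_0$ with the hyperplane $\ker(\ev_{x_\alpha}\colon V\to\C)$. Since $[x_\alpha]$ is non-real, some ratio $x_{\alpha,j}/x_{\alpha,i}$ is non-real, so the values $\ev_{x_\alpha}(X_i^d)=x_{\alpha,i}^d$ and $\ev_{x_\alpha}(X_i^{d-1}X_j)=x_{\alpha,i}^{d-1}x_{\alpha,j}$ are $\R$-linearly independent in $\C$; hence $\ev_{x_\alpha}\colon V(\R)\to\C$ is $\R$-surjective and $D_\alpha\cap V(\R)$ has real codimension $2$ at $F_0$. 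Taking the union over the finitely many $\alpha$ gives the codim-$2$ bound on the smooth part of $E$; on the singular locus of $V\setminus B$ (of complex codimension $\geq 2$ in $V$), the real codimension is automatically $\geq 2$ since real points of a complex variety have real dimension at most its complex dimension.

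The main difficulty is this codimension analysis, which relies on the observation that evaluation at a non-real projective point is $\R$-surjective onto $\C$, so that each local complex equation of the discriminant imposes two independent real equations on the real polynomials.
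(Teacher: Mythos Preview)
Your argument is correct in substance, and for openness and the equality $P_d^{\sm}=B(\R)\cap P_d^+$ it matches the paper's (your ``a real zero is a minimum, hence $\nabla F=0$'' and the paper's ``a smooth real zero forces $dF$ surjective, hence negative values nearby'' are contrapositives of one another).

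For connectedness, however, your branch-by-branch analysis is both misstated and unnecessary. The phrase ``smooth point $F_0$ of $V\setminus B$'' is inconsistent with what follows: a smooth point of the discriminant hypersurface corresponds to a hypersurface with a \emph{unique} ordinary double point, so only one branch passes through it, and that unique singular point, being fixed by complex conjugation, would be real---impossible for $F_0\in E$. What you presumably intend is a point $F_0$ at which all singularities of $\{F_0=0\}$ are ordinary double points, so that the discriminant has smooth local analytic branches $D_\alpha$; with that correction your tangent computation $T_{F_0}D_\alpha=\ker(\ev_{x_\alpha})$ and the $\R$-surjectivity of $\ev_{x_\alpha}|_{V(\R)}$ are fine. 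But the whole analysis is never invoked: since the singular points of $\{F_0=0\}$ for $F_0\in E$ come in non-real conjugate pairs, there are always at least two of them, so every $F_0\in E$ already lies in the singular locus of $V\setminus B$---the case you dispatch in one line at the end.

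The paper's proof is exactly this shortcut. It simply observes that $P_d^+\setminus P_d^{\sm}$ is contained in the locus of $F\in V$ whose zero set has at least two singular points, a subvariety of complex codimension $\geq 2$ in $V$, hence a semialgebraic subset of real codimension $\geq 2$ in $V(\R)$; removing such a set from the convex (hence connected) open $P_d^+$ leaves it connected. So your proof and the paper's ultimately rest on the same observation, but the paper reaches it directly without the tangent-space detour.
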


\begin{proof}
Let $F\in P_d^{\sm}$ and $x\in \R^{N+1}\setminus\{0\}$. If $F(x)>0$ did not hold, then $F(x)=0$, and $x$ would be a smooth point of $\{F=0\}$. Consequently, the differential $dF_x$ would be surjective and $F$ would take negative values near $x$, which is a contradiction. This shows that $P^{\sm}_d=B(\R)\cap P_d^+$.
Since $ P_d^+$ is open and convex \cite[Lemma 4.2]{H17}, it follows that $ P_d^{\sm}$ is open and $P_d^+$ is connected.

The complement of $P_d^{\sm}$ in $P_d^+$ consist of polynomials $F$ such that $\{F=0\}$ is singular and has no real points. It follows that $\{F=0\}$ has at least two singular points (any of them and its distinct complex conjugate). But the set polynomials $F\in V$ such that $\{F=0\}$ has at least two singular points has codimension $\geq 2$, so that $P_d^{\sm}$ is the complement in $P_d^+$ of a semialgebraic set of codimension $\geq 2$. Since the open set $P_d^+\subset V(\R)$ is connected, so is $P_d^{\sm}$.
\end{proof}

For the remainder of this section, we restrict to the case $N=2$. We consider the universal family $\mathcal{S}\subset \bP(1,1,1,\delta)\times B$ with projection $\pi:\mathcal{S}\to B$, parametrizing double covers of $\bP^2$ ramified over a smooth curve of degree $d$: if $b\in B(\C)$ corresponds to the polynomial $F$, one has $\mathcal{S}_b=\{Z^2+F(X_0,X_1,X_2)=0\}\subset\bP(1,1,1,\delta)$. 
The map $\pi$ is a smooth projective morphism of algebraic varieties over $\R$.
We are interested in the fibers of $\pi$ over $P_d^{\sm}\subset B(\R)$.

\begin{lem}
\label{dense}
For a dense set of $b\in P_d^{\sm}$, there exists $\lambda\in H^{1,1}_{\R}(\mathcal{S}_b(\C))(1)^G$ such that the composition:
\begin{equation}
\label{composition2}
T_{B_{\C},b}\longrightarrow H^1(\mathcal{S}_{b,\C},T_{\mathcal{S}_{b,\C}})\stackrel{\lambda}\longrightarrow H^2(\mathcal{S}_{b,\C}, \sO_{\mathcal{S}_{b,\C}})
\end{equation}
of the Kodaira-Spencer map and of the contracted cup-product with~$\lambda$ is surjective.
\end{lem}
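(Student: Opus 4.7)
The strategy is to apply the density part of Proposition~\ref{GreenRgeom} to the subset $P_d^{\sm}\subset B(\R)$, so I must: (i) verify that $P_d^{\sm}$ is a connected component of $B(\R)$; and (ii) exhibit one point $b_0\in P_d^{\sm}$ at which the composition~(\ref{composition2}) is surjective for some class~$\lambda$. Item (i) is straightforward: $P_d^{\sm}$ is open and connected by Proposition~\ref{openconnected}, and it is also closed in $B(\R)$, because the limit in $B(\R)$ of a sequence of positive semidefinite polynomials is again positive semidefinite and, being in $B(\R)$, lies in $P_d^+$ by the argument of Proposition~\ref{openconnected}. Hence density on the unique connected component containing $b_0$ amounts to density in $P_d^{\sm}$ itself.

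For (ii), I will use the construction of~\S\ref{setup}: fix $M$ and $\tau$ as in Lemma~\ref{conditionssurface}, producing a smooth real curve $C$ contained in a smooth real surface $S\subset X=\Gamma_{\C}\times\bP^2_{\C}$ whose second projection exhibits $S\to\bP^2_{\R}$ as a finite double cover branched along a smooth real curve $D\subset\bP^2_{\R}$ of degree~$d$. Since $\Gamma(\R)=\varnothing$, we have $S(\R)=\varnothing$; hence $D$ itself has no real points (a real point of $D$ would lift to a real ramification point of $S$), so an equation $F\in\R[X_0,X_1,X_2]_d$ of $D$ does not change sign and, after possibly changing its overall sign, lies in $P_d^+$. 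There are exactly two real forms of a double cover of $\bP^2_{\R}$ ramified along~$D$, classified by $\R^*/(\R^*)^2\simeq\Z/2\Z$; only the form $\{Z^2+F=0\}\subset\bP(1,1,1,\delta)$ has no real points, and so it must be isomorphic over $\R$ to $S$. This identifies $S\simeq \mathcal{S}_{b_0}$ over $\R$ for a point $b_0\in P_d^{\sm}$, and then $\lambda:=[C]$ belongs to $H^{1,1}_{\R}(\mathcal{S}_{b_0}(\C))(1)^G$ by Remark~\ref{algcohoclass}, as $C$ is defined over $\R$.

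To check Green's criterion at~$b_0$, observe that Hypotheses~\ref{hypo} for $(X,H_j,L)$ have been verified in~\S\ref{setup}; Proposition~\ref{surj} therefore gives the surjectivity of $\psi_{\lambda}\colon H^0(S,N_{S/X})\to H^2(S,\sO_S)$. Assigning to $\tau\in H^0(X,H)$ the discriminant polynomial $F_\tau\in V$ defining the branch curve of $\{\tau=0\}\to\bP^2$ gives a morphism from (an open subset of) $H^0(X,H)$ to $V$ through which the tautological family of surfaces $\{\tau=0\}\subset X$ factors via $\mathcal{S}\to B$; by functoriality of the Kodaira--Spencer construction, the boundary map $H^0(S,N_{S/X})\to H^1(S,T_S)$ factors as $H^0(S,N_{S/X})\to T_{B,b_0}\to H^1(S,T_S)$, where the second arrow is the Kodaira--Spencer map of $\mathcal{S}\to B$. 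The surjectivity of $\psi_{\lambda}$ therefore forces surjectivity of~(\ref{composition2}), and the density statement of Proposition~\ref{GreenRgeom} applied to the connected component $P_d^{\sm}$ concludes.

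The principal obstacle is the identification, as a real variety, of the surface $S\subset\Gamma\times\bP^2_{\R}$ with a fibre of the universal family $\mathcal{S}\to B$ of double covers in the normal form $\{Z^2+F=0\}$: this hinges on the vanishing $S(\R)=\varnothing$, inherited from $\Gamma(\R)=\varnothing$, and on the classification of the real forms of a double cover of $\bP^2_{\R}$ with prescribed branch locus. The compatibility of the two Kodaira--Spencer maps (the one coming from embedded deformations in $X$, appearing in Proposition~\ref{surj}, and the one for the family $\mathcal{S}\to B$) is also a point that requires care, although it follows conceptually from the existence of the branch-locus morphism $H^0(X,H)\to V$.
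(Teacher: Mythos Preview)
Your argument is correct and shares the paper's architecture (build $(C,S)$ via Lemma~\ref{conditionssurface}, set $\lambda=[C]$, invoke Proposition~\ref{surj}, conclude via Proposition~\ref{GreenRgeom}), but diverges in the passage from the surjectivity of $\psi_\lambda$ to that of~(\ref{composition2}). The paper establishes surjectivity of the Kodaira--Spencer map $T_{B_\C,b_0}\to H^1(S,T_S)$ by quoting Manetti's deformation theory of double covers of $\bP^2$, with a separate treatment of $d=6$ where this map misses by one dimension and one must observe that the image of $H^0(S,N_{S/X})$ already lands in the codimension-one subspace of polarized deformations. You instead argue that the embedded-deformation family over $H^0(X,H)$ is (locally) pulled back from $\mathcal{S}\to B$ along the discriminant map, forcing the image of $H^0(S,N_{S/X})\to H^1(S,T_S)$ to lie inside that of $T_{B_\C,b_0}\to H^1(S,T_S)$; this is more direct, avoids the external reference, and is uniform in~$d$. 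One caveat: the literal factorization $H^0(S,N_{S/X})\to T_{B,b_0}$ you assert does not exist as a linear map, since the kernel $\C\cdot\tau$ of the surjection $H^0(X,H)\twoheadrightarrow H^0(S,N_{S/X})$ has nonzero image $2F_{\tau_0}$ under the differential of $\tau\mapsto F_\tau$; however the containment of images in $H^1(S,T_S)$---which is all you actually use---does hold, because $F_{\tau_0}$ corresponds to rescaling and dies under the Kodaira--Spencer map of $\mathcal{S}/B$. The pullback claim itself is best justified locally on the base via the standard description of flat double covers as $\Spec(\sO\oplus\mathcal{L}^{-1})$ and the triviality of $\Pic$ on a contractible~$U$. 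Your identification of $S$ with $\mathcal{S}_{b_0}$ over~$\R$ through the dichotomy of real forms of the double cover, and your verification that $P_d^{\sm}$ is closed in $B(\R)$, are both more explicit than the paper's corresponding steps.
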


\begin{proof}
It follows from the conditions listed  in Lemma \ref{conditionssurface} that the surface $S$ constructed there is isomorphic to a real member of the family $\pi:\mathcal{S}\to B$: there exists $b_0\in B(\R)$ such that $S\simeq\mathcal{S}_{b_0}$. Since $S$ projects to $\Gamma$, $S(\R)=\varnothing$, and we deduce that $b_0\in P_d^{\sm}$. 
By Remark \ref{algcohoclass}, the cohomology class $\lambda_0$ of the line bundle $\mathcal{L}=\sO_S(C)$ associated to the curve constructed in Lemma \ref{conditionssurface} belongs to $H^{1,1}_{\R}(\mathcal{S}_{b_0}(\C))(1)^G$. Moreover, Proposition \ref{surj}, shows that the contracted cup-product $H^1(\mathcal{S}_{b_0,\C},T_{\mathcal{S}_{b_0,\C}})\stackrel{\lambda_0}\longrightarrow H^2(\mathcal{S}_{b_0,\C}, \sO_{\mathcal{S}_{b_0,\C}})$ is surjective.

The deformation theory of the family of smooth double covers of $\bP^2$, carried out in \cite[p.260]{manetti}, shows that the Kodaira-Spencer map $T_{B_{\C},b_0}\to H^1(\mathcal{S}_{b_0,\C},T_{\mathcal{S}_{b_0,\C}})$ is surjective unless $d=6$. Indeed, \cite[(1.3')]{manetti} applied with $Y=\bP^2$ shows that the cokernel of this map embeds into $H^1(\bP^2,T_{\bP^2})\oplus H^1(\bP^2,T_{\bP^2}(-\delta))$, that vanishes when $\delta\neq 3$ and is one-dimensional when $\delta=3$ (as one computes using the Euler exact sequence and Serre duality). 
When $d\neq 6$, this shows at once that the map (\ref{composition2}) is surjective for $(b,\lambda)=(b_0,\lambda_0)$. In the exceptional case $d=6$, the double covers are $K3$ surfaces, and the image of the Kodaira-Spencer map $T_{B_{\C},b_0}\to H^1(\mathcal{S}_{b_0,\C},T_{\mathcal{S}_{b_0,\C}})$ is included in the subspace of polarized infinitesimal deformations, that preserve the line bundle $\sO_{\bP^2}(1)$. This subspace has codimension $1$ (see \cite[Chapter 6, 2.4]{Huybrechts}).
Since the image of $H^0(S,N_{S/X})\to H^1(S, T_{S})$ in (\ref{ksmap}) lands in this subspace, we still deduce from Proposition \ref{surj} the surjectivity of (\ref{composition2}) for $(b,\lambda)=(b_0,\lambda_0)$.

The lemma then follows from the last statement of Proposition \ref{GreenRgeom}, that applies because $P_d^{\sm}\subset B(\R)$ is open and connected by Proposition \ref{openconnected}.
\end{proof}

\subsection{Density} 
\label{parproof}
We are now ready to give the proof of the density statement of Theorem \ref{main}. Recall from \S\ref{sos} that $P_d$ (resp. $Q_d$) is the subset of $\R[x_1,x_2]_{\leq d}$ consisting of polynomials that are positive semidefinite (resp. sums of three squares in $\R(x_1,x_2)$).
\begin{prop}
\label{density}
The set $Q_d$ is dense in $P_d$.
\end{prop}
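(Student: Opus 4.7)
The plan is to combine Green's deformation theorem (Proposition \ref{GreenRgeom}) with the Brauer-theoretic reformulation supplied by Lemmas \ref{23} and \ref{NLsquares}, and to exploit the fibration of the surface from Section \ref{setup} over the anisotropic conic $\Gamma$ in order to produce a Hodge class whose associated line bundle fails to descend to $\R$.

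Since strictly positive polynomials whose associated ramified double cover is smooth are dense in $P_d$, it suffices to approximate any $f$ corresponding to some $b\in P_d^{\sm}$. I would first move (via Lemma \ref{dense}) to a nearby $b\in P_d^{\sm}$ at which Green's criterion applies, and then invoke Proposition \ref{GreenRgeom} to produce an open cone $\Omega\subset H^2(\mathcal{S}_b(\C),\R(1))^G$ of ``realizable'' Hodge classes: every $\omega\in\Omega$ becomes a Hodge class at some $b'\in\Delta(\R)$, which lies in $P_d^{\sm}$ by openness. Since $\pi_*\sO_{\mathcal{S}_b}=\sO_{\bP^2}\oplus\sO_{\bP^2}(-\delta)$ gives $H^1(\sO_{\mathcal{S}_{b,\C}})=0$, every integral $\omega\in\Omega$ corresponds at the associated $b'$ to an honest line bundle $L_\omega\in\Pic(\mathcal{S}_{b',\C})^G$.

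By Lemmas \ref{23} and \ref{NLsquares}, proving $f(b')\in Q_d$ reduces to showing that the Colliot-Th\'el\`ene obstruction $\phi:\Pic(\mathcal{S}_{b',\C})^G\to\Br(\R)=\Z/2\Z$ is nonzero on $[L_\omega]$. This obstruction is topological (coming from a differential of the Hochschild--Serre spectral sequence), so I would view $\phi$ as a homomorphism on $H^2(\mathcal{S}_b(\C),\Z(1))^G$ that is monodromy-invariant along the connected family $P_d^{\sm}$ (Proposition \ref{openconnected}). Its nontriviality can then be checked at the single point $b_0\in P_d^{\sm}$ corresponding to the determinantal surface $\mathcal{S}_{b_0}\subset\Gamma\times\bP^2_\R$ of Lemma \ref{conditionssurface}: the line bundle $(p_1^*\sO_{\Gamma_\C}(1))|_{\mathcal{S}_{b_0,\C}}$ is $G$-invariant, yet by naturality of the Colliot-Th\'el\`ene sequence under pullback from $\Gamma$ its obstruction equals that of $\sO_{\Gamma_\C}(1)\in\Pic(\Gamma_\C)^G=\Z$ modulo $\Pic(\Gamma)=2\Z$, hence is nontrivial.

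It remains to realize a class with $\phi=1$ inside $\Omega$. A rational point of the open cone $\Omega$ scales to an integer class $\lambda\in\Omega$; if $\phi(\lambda)=1$, use an odd multiple, while if $\phi(\lambda)=0$, replace $\lambda$ by $N\lambda+\omega_0$ for a fixed $\omega_0$ with $\phi(\omega_0)=1$ and $N$ large enough that the translation by $\omega_0$ stays inside the open cone around $N\lambda$. Applying Proposition \ref{GreenRgeom} to such $\omega$ produces $b'\in P_d^{\sm}$ arbitrarily close to $b$ with $f(b')\in Q_d$, yielding the claimed density. The main obstacle will be verifying the surjectivity of the Brauer obstruction, which is precisely what the appearance of the anisotropic conic $\Gamma$ in Section \ref{setup} — rather than $\bP^1_\R$ — is designed to achieve.
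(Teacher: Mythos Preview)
Your plan is correct and follows the same architecture as the paper's proof: reduce to $P_d^{\sm}$, move via Lemma~\ref{dense} to a point where Green's criterion holds, obtain the cone $\Omega$ from Proposition~\ref{GreenRgeom}, locate in $\Omega$ an integral $G$-invariant class with nontrivial descent obstruction, transport it to a nearby $b'$ where it becomes Hodge, apply Lefschetz $(1,1)$ and the equivariant cycle class map to conclude the resulting line bundle is not real, and finish with Lemmas~\ref{NLsquares} and~\ref{23}.

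The one substantive difference is how you establish that the topological obstruction $\phi$ (the projection to the cokernel of the edge map $\varepsilon:H^2_G\to (H^2)^G$) is nontrivial. The paper cites a general lemma of van Hamel, which uses only $H_1(\mathcal{S}_b(\C),\Z)=0$ to identify this cokernel with $\Z/2\Z$ at every $b$; the complement of $\mathrm{im}(\varepsilon)$ is then a lattice coset and automatically meets the open cone $\Omega$. You instead exhibit, at the special point $b_0$, the class of $p_1^*\sO_{\Gamma_\C}(1)$ with nontrivial Brauer obstruction, and transport along the connected set $P_d^{\sm}$. Your route is more geometric and reveals a secondary role for the anisotropic conic (in the paper, $\Gamma$ is used only to force $S(\R)=\varnothing$, hence $b_0\in P_d^{\sm}$). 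However, to make your argument rigorous you must verify that the \emph{Brauer} obstruction on $\Pic(\mathcal{S}_{b_0,\C})^G$ and the \emph{topological} obstruction on $H^2(\mathcal{S}_{b_0}(\C),\Z(1))^G$ agree under the cycle class map: knowing $\mathcal{L}_0\notin\Pic(\mathcal{S}_{b_0})$ does not by itself force $[\mathcal{L}_0]\notin\mathrm{im}(\varepsilon)$. This can be done via functoriality of the Hochschild--Serre spectral sequence along $p_1:\mathcal{S}_{b_0}\to\Gamma$ together with a direct computation of $\varepsilon$ for $\Gamma$ (where $\Gamma(\C)\simeq S^2$ with the antipodal involution), but it is an extra step; the paper's appeal to van Hamel's lemma bypasses it.
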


\begin{proof}
Fix an open subset $U\subset P_d$, and $b\in U\subset \R[X_0,X_1,X_2]_d$ corresponding to the inhomogeneous polynomial $f\in \R[x_1,x_2]_{\leq d}$.
Our goal is to construct $b'\in U$ such that the associated polynomial $f'\in\R[x_1,x_2]_{\leq d}$ is a sum of three squares in $\R(x_1,x_2)$.

Replacing $f$ with $f+t(1+x_1^d+x_2^d)$ for $t\in\R_{>0}$ small enough, we may assume that $b\in P_d^{\sm}$ (see \S\ref{Greendouble}). 
Up to changing $b\in U$ again, Lemma \ref{dense} allows us to suppose that there exists $\lambda\in H^{1,1}_{\R}(\mathcal{S}_b(\C))(1)^G$ such that (\ref{composition2}) is surjective. Consequently, we can choose an open cone $\Omega\subset H^2(\mathcal{S}_b(\C),\R(1))^G$ as in Proposition \ref{GreenRgeom}.

Shrinking $U$, we may assume it is of the form $\Delta(\R)$ for some $\Delta\subset B(\C)$ as in \S\ref{infcrit}.
Denote by $\mathcal{S}(\C)|_{\Delta(\R)}$ the inverse image of $\Delta(\R)$ by $\pi:\mathcal{S}(\C)\to B(\C)$. By a $G$-equivariant version of Ehresmann's theorem \cite[Lemma 4]{Dimca}, it is possible, after shrinking $\Delta$, to ensure that there is a $G$-equivariant diffeomorphism commuting with the projection to $\Delta(\R)$:
 \begin{equation}
\label{Ehresmann}
\mathcal{S}(\C)|_{\Delta(\R)}\stackrel{\sim}\longrightarrow \mathcal{S}_b(\C)\times \Delta(\R).
\end{equation}

Let us now look at the Hochschild-Serre spectral sequence computing the $G$-equivariant cohomology of $\mathcal{S}_b(\C)$:
\begin{equation}
\label{HSss}E_2^{p,q}=H^p(G, H^q(\mathcal{S}_{b}(\C),\Z(1)))\implies H^{p+q}_G(\mathcal{S}_{b}(\C),\Z(1)).
\end{equation}
Since $H_1(\mathcal{S}_b(\C),\Z)=0$ (see \cite[Proposition 6 (i) (ii)]{Dimca}),
 \cite[Lemma 2.3]{vanHamel} shows that the cokernel of the edge morphism $\varepsilon:H^2_G(\mathcal{S}_b(\C),\Z(1))\to H^2(\mathcal{S}_b(\C),\Z(1))^G$ is isomorphic to $\Z/2\Z$. Let $\Lambda$ be the image in $H^2(\mathcal{S}_b(\C),\R(1))^G$ of the complement of the image of $\varepsilon$. Since $\Lambda$ is a translate of a lattice in the real vector space $H^2(\mathcal{S}_b(\C),\R(1))^G$, it meets the open cone $\Omega$. Consequently, we can find a class $\omega\in H^2(\mathcal{S}_b(\C),\Z(1))^G$ not in the image of $\varepsilon$, whose image in $H^2(\mathcal{S}_b(\C),\R(1))^G$, still denoted by $\omega$, belongs to $\Omega$. By construction of $\Omega$, there exists $b'\in \Delta(\R)$ such that the parallel transport $\omega'\in H^2(\mathcal{S}_{b'}(\C),\R(1))^G$  belongs to $H^{1,1}_{\R}(\mathcal{S}_{b'}(\C))(1)^G$.

 For every $k\geq 0$, the two restriction maps $H^k(\mathcal{S}(\C)|_{\Delta(\R)},\Z(1))\to H^k(\mathcal{S}_{b}(\C),\Z(1))$ and $H^k(\mathcal{S}(\C)|_{\Delta(\R)},\Z(1))\to H^k(\mathcal{S}_{b'}(\C),\Z(1))$ are $G$-equivariant isomorphisms by (\ref{Ehresmann}) and contractibility of $\Delta(\R)$. We deduce that the restriction map from the Hochschild-Serre spectral sequence for the $G$-equivariant cohomology
of $\mathcal{S}(\C)|_{\Delta(\R)}$:
$$E_2^{p,q}=H^p(G, H^q(\mathcal{S}(\C)|_{\Delta(\R)},\Z(1)))\implies H^{p+q}_G(\mathcal{S}(\C)|_{\Delta(\R)},\Z(1))$$
to that (\ref{HSss}) for $\mathcal{S}_{b}(\C)$ is an isomorphism in page $2$, hence an isomorphism. The same goes for the restriction map between the Hochschild-Serre spectral sequences for
$\mathcal{S}(\C)|_{\Delta(\R)}$ and $\mathcal{S}_{b'}(\C)$.
We can thus deduce from the corresponding property of $\omega$ that $\omega'\in H^2(\mathcal{S}_{b'}(\C),\R(1))^G$ lifts to a class $\omega'\in H^2(\mathcal{S}_{b'}(\C),\Z(1))^G$ not in the image of the edge map $\varepsilon':H^2_G(\mathcal{S}_{b'}(\C),\Z(1))\to H^2(\mathcal{S}_{b'}(\C),\Z(1))^G$.

Since $\Pic^0(\mathcal{S}_{b',\C})=0$ by \cite[Proposition 6 (i)]{Dimca}, the Lefschetz $(1,1)$ theorem shows that:
$$\Pic(\mathcal{S}_{b',\C})\stackrel{\sim}\longrightarrow \Hdg^2(\mathcal{S}_{b'}(\C),\Z(1)):= H^2(\mathcal{S}_{b'}(\C),\Z(1))\cap H^{1,1}(\mathcal{S}_{b'}(\C)),$$
hence that $\omega'\in\Hdg^2(\mathcal{S}_{b'}(\C),\Z(1))^G$ is the class of a line bundle $\mathcal{L}\in\Pic(\mathcal{S}_{b',\C})^G$. If $\mathcal{L}$ were induced by a real line bundle on $\mathcal{S}_{b'}$, the existence of a cycle class map with value in $G$-equivariant Betti cohomology \cite[\S 1.3]{Krasnov} would show that $\omega'$ lifts to $H^2_G(\mathcal{S}_{b'}(\C),\Z(1))$, a contradiction.

By implication (ii)$\implies$(i) of Lemma \ref{NLsquares}, we deduce that $-1$ is a sum of two squares in $\R(\mathcal{S}_{b'})$. Lemma \ref{23} then shows that
 the polynomial $f'\in\R[x_1,x_2]_{\leq d}$ associated to $b'$ is a sum of three squares in $\R(x_1,x_2)$, which is what we wanted.
\end{proof}

\subsection{The set of sums of three squares}
\label{proof}
We finally complete the:

\begin{proof}[Proof of Theorem \ref{main}]

Let $Q^N_d\subset \R[x_1,x_2]_{\leq d}$ be the set of polynomials $f$ such that there exist polynomials $g,h_1,h_2,h_3\in\R[x_1,x_2]_{\leq N}$ of degree $\leq N$ satisfying:
\begin{equation}
\label{sum3}
fg^2=h_1^2+h_2^2+h_3^2\textrm{ and }g\neq 0.
\end{equation}

  It is an immediate consequence of the Tarski-Seidenberg theorem \cite[Theorem 2.2.1]{BCR} that $Q^N_d$ is a semialgebraic subset of $P_d$.
Let us prove that $Q_d^N$ is a closed subset of $\R[x_1,x_2]_{\leq d}$ by adapting 
 \cite[Theorem 3]{Bergart}. 
Consider the norm $\|h\|:=\sup_{p\in[0,1]^2}h(p)$ on $\R[x_1,x_2]$.
Let $(f_j)_{j\in \mathbb{N}}$ be a sequence of elements of $Q_d^N$ converging to $f\in \R[x_1,x_2]_{\leq d}$. Since the case $f=0$ is trivial, we may assume that the $f_j$ are nonzero. Choose $g_j,h_{j,1},h_{j,2},h_{j,3}\in\R[x_1,x_2]_{\leq N}$ with $g_j\neq 0$ such that:
$$f_jg_j^2=h_{j,1}^2+h_{j,2}^2+h_{j,3}^2.$$
Up to scaling $g_j$ and the $h_{j,i}$, we may assume that $\|f_jg_j^2\|=1$ and as a consequence that $\|h_{j,i}\|\leq 1$ for $1\leq i\leq 3$. Extracting subsequences, we may ensure that the sequences $(g_j)$ and $(h_{j,i})$ converge to polynomials $g,h_i\in\R[x_1,x_2]_{\leq N}$. Taking the limit, we see that $\|fg^2\|=1$ so that $g\neq 0$, and that (\ref{sum3}) holds. Since $Q_d=\cup_{N\in\mathbb{N}} Q_d^N$, this proves the first assertion of Theorem \ref{main}.

If $d\geq 6$, it is a consequence of the Noether-Lefschetz theorem applied as in \cite{CTNL} that $Q_d$ has empty interior. More precisely, every open subset of $\R[x_1,x_2]_{\leq d}$ contains a polynomial whose coefficients are algebraically independent over $\Q$, and such a polynomial cannot be a sum of three squares of rational functions by \cite[Theorem 3.1]{CTNL}. One could also argue as in \cite[Remark 4.3]{CTNL}: $Q_d$ is included in a countable union of proper closed algebraic subvarieties of $\R[x_1,x_2]_{\leq d}$, hence has empty interior by a Baire category argument.

Finally, we have proven the last assertion of Theorem \ref{main} in Proposition \ref{density}.
\end{proof}

\section{Surfaces whose function field has level $2$}\label{sec4}

The proof of Theorem \ref{main2} is analogous to that of Proposition \ref{density}. 
We fix an even integer $d\geq 2$.

Consider $X:=\bP^3_{\C}$, define $B\subset \bP(H^0(X,\sO_X(d)))$ to be the subset parametrizing equations $F$ defining smooth surfaces $S\subset X$, and $\pi:\mathcal{S}\to B$ to be the universal surface. Endow $X$, $B$ and $\mathcal{S}$ with their natural real structures.  Recall from \S\ref{introlevel} that $\Theta_d\subset B(\R)$ is the set of equations $F$ whose associated surfaces $S=\{F=0\}$ have no real points. Since $\R^4\setminus\{0\}$ is connected, $\Theta_d$ consists of equations, well-defined up to a scalar, that are either positive or negative on $\R^4\setminus\{0\}$
and Proposition \ref{openconnected} implies that $\Theta_d \subset B(\R)$ is open and connected.

Set $n:=d$ if $d\equiv 0 \pmod 4$ and $n:=d-2$ if $d\equiv 2 \pmod 4$. Define $H_j:=\sO_{X}(1)$ for $1\leq j\leq n$, $L:=\sO_X$ if $d\equiv 0 \pmod 4$ and $L:=\sO_X(2)$ if $d\equiv 2 \pmod 4$. Note that Hypotheses \ref{hypo} are satisfied. When $d\equiv 2 \pmod 4$, those are exactly the original choices of Ciliberto and Lopez \cite{CL}.

Consider $(n+1)\times n$ matrices $M=(M_{i,j})_{1\leq i\leq n+1, 1\leq j\leq n}$ with $M_{i,j}\in H^0(X,H_j)$.
To such a matrix $M$, we associate the variety $C\subset X$ defined by the vanishing of all the maximal minors of $M$. We also consider a section $\tau\in H^0(X,\sO(d))$ vanishing on $C$ with zero locus $S\subset X$.
The following is an analogue of Lemma \ref{conditionssurface}:

\begin{lem}
\label{conditionssurface2}
It is possible to find $M$ and $\tau$ such that:
\begin{enumerate}[(i)]
\item $C$ is a smooth curve and $S$ is a smooth surface,
\item the subvarieties $C$ and $S$ of $X$ are defined over $\R$,
\item $S(\R)=\varnothing$.
\end{enumerate}
\end{lem}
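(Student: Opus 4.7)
The plan is to construct an explicit real pair $(M_0,\sigma_0)$ for which the resulting degree $d$ form $\tau_0$ is positive definite on $\R^4\setminus\{0\}$ --- so that the corresponding surface automatically has no real points --- and then to use the density of real points in Zariski-open subsets of the real parameter space to perturb and additionally achieve the smoothness conditions (i) and (ii).

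For the construction, I would take $M_q$ to be the $4\times 4$ matrix of left multiplication by the quaternion $q:=X_0+X_1 i+X_2 j+X_3 k$ on $\mathbb{H}\simeq\R^4$. Its entries are $\R$-linear in the $X_\ell$, and the identity $M_q M_q^{\mathsf{T}}=(X_0^2+X_1^2+X_2^2+X_3^2)I$ yields $\det(M_q)=(X_0^2+X_1^2+X_2^2+X_3^2)^2$, a positive definite form of degree $4$. In both cases $d\equiv 0\pmod 4$ and $d\equiv 2\pmod 4$ the integer $n$ is divisible by $4$; setting $m:=n/4$, let $M_0$ be the $(n+1)\times n$ real matrix whose first $n$ rows form the block-diagonal matrix with $m$ copies of $M_q$ and whose last row is zero, so that the only non-zero maximal minor is $\mu_0:=\mathrm{minor}_{n+1}(M_0)=(X_0^2+X_1^2+X_2^2+X_3^2)^{2m}$. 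Let $\Phi:\bP\times H^0(X,L)^{n+1}\to H^0(X,H+L)$ be the $\R$-algebraic map $(M,(\sigma_i))\mapsto \sum_{i=1}^{n+1}(-1)^i\sigma_i\cdot\mathrm{minor}_i(M)$; by the Eagon-Northcott resolution (\ref{EN}), its image at $M$ lies in $H^0(X,\mathcal{I}_{C_M}(H+L))$. Choosing $\sigma_0$ to be the tuple whose only non-zero entry is the $(n+1)$-st, equal to $1\in\C$ when $L=\sO_X$ and to $X_0^2+X_1^2+X_2^2+X_3^2\in H^0(X,\sO_X(2))$ when $L=\sO_X(2)$, I obtain $\tau_0:=\Phi(M_0,\sigma_0)=(X_0^2+X_1^2+X_2^2+X_3^2)^{d/2}$, positive definite on $\R^4\setminus\{0\}$.

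For the density step, Lemma \ref{smooth} (applicable since Hypotheses \ref{hypo} are satisfied) shows that the locus $W\subset \bP\times H^0(X,L)^{n+1}$ of pairs $(M,(\sigma_i))$ for which $C_M$ is a smooth curve and the hypersurface $S:=\{\Phi(M,(\sigma_i))=0\}$ is a smooth surface is Zariski-open, nonempty, and defined over $\R$. Any proper $\R$-defined Zariski-closed subset has real locus of empty interior in the smooth real analytic manifold $\bP(\R)\times H^0(X,L)^{n+1}(\R)$, since no nonzero real polynomial vanishes on a Euclidean ball of an affine chart; hence $W(\R)$ is dense in the real topology. On the other hand, the set $U$ of real points $(M,(\sigma_i))$ for which $\Phi(M,(\sigma_i))$ is positive definite on $\R^4\setminus\{0\}$ is open in the real topology and contains $(M_0,\sigma_0)$, so it meets $W(\R)$. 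Any $(M,(\sigma_i))\in W\cap U$ then provides $M$ and $\tau:=\Phi(M,(\sigma_i))$ satisfying (i), (ii) and (iii): the positive-definiteness of $\tau$ forces $S(\R)=\varnothing$.

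The key step --- and the only one genuinely different from the formal density argument used in Lemma \ref{conditionssurface} --- is the opening construction, namely producing a real $n\times n$ matrix of linear forms on $\bP^3_{\R}$ whose determinant is a power of a positive definite quadratic form. This is exactly what the quaternionic block structure provides, and it is essential here that $4\mid n$ in both residue classes of $d \bmod 4$, which is guaranteed by the choices $n=d$ for $d\equiv 0\pmod 4$ and $n=d-2$ for $d\equiv 2\pmod 4$.
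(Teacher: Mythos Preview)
Your proof is correct and follows essentially the same route as the paper: both exploit that $4\mid n$ in either residue class to assemble $M_0$ from quaternionic $4\times 4$ blocks whose determinant is $(\sum X_i^2)^2$, and then perturb inside the real parameter space, combining the openness of (positive/negative) definiteness with the Zariski-openness of the smoothness locus from Lemma~\ref{smooth}. One harmless slip: with your sign convention and $n+1$ odd you actually get $\tau_0=-(\sum X_i^2)^{d/2}$, negative rather than positive definite, which of course still forces $S(\R)=\varnothing$; and your appeal to Lemma~\ref{smooth} for nonemptiness of $W$ implicitly uses that $\Phi(M,\cdot)$ surjects onto $H^0(X,\mathcal{I}_{C_M}(H+L))$, which follows at once from the twisted sequence~(\ref{EN}) and the vanishing of $H^1(\bP^3,\sO(k))$.
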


\begin{proof}
Since $n\equiv 0 \pmod 4$, one may consider a particular choice $M^0$ of $M$, that is defined over $\R$, and whose $n\times n$ submatrix $(M^0_{i,j})_{1\leq i,j\leq n}$ is diagonal by blocks with blocks:
$$\begin{pmatrix}
X_0 & -X_1 &X_2 &  X_3\\
X_1 & X_0 &X_3& -X_2\\
X_2 & X_3 & -X_0 &X_1\\
X_3 & -X_2 &-X_1&-X_0 
\end{pmatrix}.$$
Computing that the determinant of every such block is $\sum_{0\leq i,j\leq 3}X_i^2X_j^2$, one sees that $\eta^0:=\det((M^0_{i,j})_{1\leq i,j\leq n})\in H^0(X,\sO_X(n))$ does not vanish on $X(\R)$. Let $M$ be a general small real perturbation of $M^0$: the property that $\eta:=\det((M_{i,j})_{1\leq i,j\leq n})$ does not vanish on $X(\R)$ persists.  Choose $\tau$ to be a general small real perturbation of $\eta$ if $d\equiv 0 \pmod 4$ (resp. of $(X_0^2+X_1^2+X_2^2+X_3^2)\cdot\eta$ if $d\equiv 2 \pmod 4$). By  Lemma \ref{smooth}, the curve $C$ and the surface $S$ are smooth, and the pair $(M,\tau)$ satisfies the required conditions.
\end{proof}

\begin{rem}
When $d\equiv 2\pmod 4$, it is not possible to run our strategy with $n=d$ and $L=\sO_X$ as when $d\equiv 0\pmod 4$. Indeed, the degree of the determinantal curve $C$, that is equal to $\frac{n(n+1)}{2}$ by \cite[Proposition 12 (a)]{HarrisTu},
 would be odd. Consequently, $C$ would have a real point and assertion (iii) of Lemma \ref{conditionssurface2} could not hold.
\end{rem}

We deduce an analogue of Lemma \ref{dense}:

\begin{lem}
\label{dense2}
For a dense set of $b\in \Theta_d$, there exists $\lambda\in H^{1,1}_{\R}(\mathcal{S}_b(\C))(1)^G$ such that the composition of the Kodaira-Spencer map and of the contracted cup-product with~$\lambda$:
\begin{equation}
\label{composition3}
T_{B_\C,b}\longrightarrow H^1(\mathcal{S}_{\C,b},T_{\mathcal{S}_{\C,b}})\stackrel{\lambda}\longrightarrow H^2(\mathcal{S}_{\C,b}, \sO_{\mathcal{S}_{\C,b}})
\end{equation}
is surjective.
\end{lem}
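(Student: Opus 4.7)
The plan is to mirror the argument of Lemma~\ref{dense}. By Lemma~\ref{conditionssurface2}, choose a matrix $M$ and a section $\tau$, both defined over $\R$, yielding a smooth determinantal curve $C$ sitting in a smooth real surface $S\subset X=\bP^3_\R$ with $S(\R)=\varnothing$. The surface $S$ corresponds to a real point $b_0\in B(\R)$ of the parameter space, and the condition $S(\R)=\varnothing$ places $b_0$ in $\Theta_d$. It therefore suffices to exhibit a class $\lambda_0\in H^{1,1}_{\R}(\mathcal{S}_{b_0}(\C))(1)^G$ for which (\ref{composition3}) is surjective at $(b_0,\lambda_0)$: the density assertion then follows at once from the last statement of Proposition~\ref{GreenRgeom}, which applies because $\Theta_d\subset B(\R)$ is open and connected, as was observed at the beginning of this section via Proposition~\ref{openconnected}.

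I would take for $\lambda_0$ the cohomology class of the line bundle $\sO_S(C)$. Since both $S$ and $C$ are defined over $\R$ by Lemma~\ref{conditionssurface2}~(ii), Remark~\ref{algcohoclass} yields $\lambda_0\in H^{1,1}_{\R}(\mathcal{S}_{b_0}(\C))(1)^G$. To verify that (\ref{composition3}) is surjective, I would note that the Kodaira--Spencer map factors through $H^0(S,N_{S/X})$: the base $B$ is Zariski-open in $\bP(H^0(X,\sO_X(d)))$, the normal bundle is $N_{S/X}=\sO_S(d)$, and the restriction map $H^0(X,\sO_X(d))\to H^0(S,\sO_S(d))$ is surjective thanks to $H^1(X,\sO_X)=0$. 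Consequently, the map (\ref{composition3}) factors as a composition of a surjection $T_{B_\C,b_0}\twoheadrightarrow H^0(S,N_{S/X})$ with the map $\psi_{\lambda_0}\colon H^0(S,N_{S/X})\to H^2(S,\sO_S)$ of~(\ref{ksmap}).

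It therefore remains to check that Proposition~\ref{surj} applies to the setup of Lemma~\ref{conditionssurface2}, which forces $\psi_{\lambda_0}$ to be surjective. This amounts to checking Hypotheses~\ref{hypo} for $X=\bP^3_\C$, the line bundles $H_j=\sO_X(1)$, and $L$ equal to $\sO_X$ or $\sO_X(2)$ according as $d\equiv 0$ or $2\pmod 4$. All these vanishings are standard computations of cohomology of line bundles on projective space (using Bott vanishing, or the Euler sequence combined with Serre duality for the top-degree cohomology).

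The only delicate point, and the place I would focus attention, is the uniformity of the argument across the two residue classes $d\equiv 0,2\pmod 4$: the choice of $L$ changes, and one should check that the section $\tau\in H^0(X,H+L)$ exhibited in the proof of Lemma~\ref{conditionssurface2} (a small perturbation of $\eta$, resp.\ of $(X_0^2+X_1^2+X_2^2+X_3^2)\cdot\eta$) really cuts out a surface of degree $d$ without real points, so that the fiber $\mathcal{S}_{b_0}$ of the universal family $\pi\colon \mathcal{S}\to B$ at the corresponding $b_0$ is isomorphic to $S$ with $b_0\in\Theta_d$. Once this bookkeeping is done, no further complication distinct from Lemma~\ref{dense} arises: unlike the double cover family, the base $B$ here is itself a linear system, so there is no analogue of the $K3$ subtlety at $d=6$ encountered in Lemma~\ref{dense} that forced a separate polarized-deformation argument.
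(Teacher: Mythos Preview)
Your proof is correct and follows essentially the same route as the paper's. The paper's argument is terser: it simply cites Proposition~\ref{surj} to conclude surjectivity of~(\ref{composition3}), leaving implicit the identification $T_{B_\C,b_0}\cong H^0(S,N_{S/X})$ that you spell out (and which is indeed an isomorphism here, via $0\to\sO_X\xrightarrow{F}\sO_X(d)\to\sO_S(d)\to 0$). Your remarks that Hypotheses~\ref{hypo} hold and that the two residue classes mod~$4$ behave uniformly are already recorded in the paper's setup preceding Lemma~\ref{conditionssurface2}, so they need not be re-argued; and your observation that no $d=6$ subtlety arises because $B$ is itself a linear system is exactly why the paper's proof here is shorter than that of Lemma~\ref{dense}.
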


\begin{proof}
The surface $S$ constructed in Lemma \ref{conditionssurface2} is isomorphic to a particular member $\mathcal{S}_{b_0}$ of the family $\pi:\mathcal{S}\to B$, corresponding to a point $b_0\in \Theta_d$. 
By Remark \ref{algcohoclass}, the cohomology class $\lambda_0$  of the real curve $C\subset S$ constructed in Lemma \ref{conditionssurface2} belongs to $H^{1,1}_{\R}(\mathcal{S}_{b_0}(\C))(1)^G$. By Proposition \ref{surj}, the map (\ref{composition3}) for $b=b_0$ and $\lambda=\lambda_0$ is surjective.
The lemma then follows from the last part of Proposition \ref{GreenRgeom}, that applies because $\Theta_d\subset B(\R)$ is open and connected.
\end{proof}

We can now give the:

\begin{proof}[Proof of Theorem \ref{main2}]
Fix an open subset $U\subset \Theta_d$, and $b\in U$ according to Lemma \ref{dense2}. Running the proof of Proposition \ref{density} (replacing $P_d^{\sm}$ by $\Theta_d$) shows that there exists $b'\in U$ such that $\Pic(\mathcal{S}_{b'})\to\Pic(\mathcal{S}_{b',\C})^G$ is not surjective.
Implication (ii)$\implies$(i) of Lemma \ref{NLsquares} concludes.
\end{proof}

\begin{acknowledgements}
I would like to thank Olivier Wittenberg for numerous discussions on related topics, as well as suggestions to improve this paper.
\end{acknowledgements}

\bibliographystyle{plain}
\bibliography{biblio}

\end{document}